\documentclass[12pt]{amsart}

\usepackage{amsmath, amsthm, amssymb}
\usepackage{hyperref}
\usepackage{mathtools}

\usepackage{todonotes}
\usepackage{xcolor} 
\usepackage{soul}   

\usepackage[margin=1in]{geometry}
\usepackage{enumitem} 

\theoremstyle{plain}
\newtheorem{theorem}{Theorem}[section]
\newtheorem{proposition}[theorem]{Proposition}

\newtheorem{lemma}[theorem]{Lemma}

\theoremstyle{definition}
\newtheorem{remark}[theorem]{Remark}
\newtheorem{example}[theorem]{Example}
\newtheorem{definition}[theorem]{Definition}
\newtheorem{conjecture}[theorem]{Conjecture}

\DeclareMathOperator{\diam}{diam}

\newcommand{\Z}{\mathbb{Z}}
\newcommand{\N}{\mathbb{N}}
\newcommand{\Cstar}{C^*}

\title[Selfless Lacunary Hyperbolic Groups]{Lacunary Hyperbolic Groups with Fast Injectivity Radius Growth and Enough Loxodromic Elements are Selfless}
\date{\today}

\author[G. Arzhantseva]{Goulnara Arzhantseva}
\address{Universit\"at Wien, Fakult\"at f\"ur Mathematik, Oskar-Morgenstern-Platz 1, 1090 Wien, Austria}
\email{goulnara.arzhantseva@univie.ac.at}

\author[M. Finn-Sell]{Martin Finn-Sell}
\email{martin.finn-sell@posteo.net}

\begin{document}
\maketitle

\begin{abstract}
We prove that a lacunary hyperbolic group $G = \varinjlim G_i$ with sufficient generics
is selfless in the sense of Amrutam--Gao--Kunnawalkam Elayavalli--Patchell \cite{AGKE},
provided the hyperbolicity constants $\delta_i$ and injectivity radii $r_i$ satisfy
$\delta_i(\log r_i)^{7} = o(r_i)$.
The proof replaces the acylindricity-based machinery of \cite{AGKE} with
a direct geodesic $n$-gon criterion due to Arzhantseva \cite{Arz01}, which applies
in any $\delta$-hyperbolic space.
As a consequence, combined with rapid decay, $G$ is $\Cstar$-selfless.
The condition is mild: torsion-free Tarski monsters, Jacobson's mixed-identity-free
elementary amenable groups and Gromov monster groups satisfy it for appropriate parameter choices.
The amenable examples are selfless but cannot be $\Cstar$-selfless, providing examples that separate these properties.
Finally we remark that the special Gromov monster group examples provides a potential avenue to a non-exact $\Cstar$-algebra that has strict comparison.
\end{abstract}

\section{Introduction}

A finitely generated group $G$ is \emph{lacunary hyperbolic}
\cite{OOS} if it is a direct limit of $\delta_i$-hyperbolic groups $G_i$
with the condition $\delta_i = o(r_i)$, where $r_i$ is the injectivity radius
of the bonding map $\alpha_i\colon G_i \twoheadrightarrow G_{i+1}$.
This class is remarkably broad: it contains non-elementary amenable groups,
infinite torsion groups, and Tarski monsters, yet every such group shares
local properties of hyperbolic groups at all scales $\ll r_i$.
The \emph{selfless} property, introduced in \cite{AGKE}, is a quantitative
strengthening of the mixed-identity-free (MIF) property.
By \cite[Theorem~3.5]{AGKE},
selflessness combined with rapid decay implies the existentially $\Cstar$-residually-$G$
property, which in turn gives $\Cstar$-selflessness and strict comparison for $\Cstar_r(G)$
\cite[Corollary~3.8]{AGKE}. 
In \cite{AGKE}, selflessness is verified for all non-elementary acylindrically
hyperbolic groups with trivial finite radical.
Lacunary hyperbolic groups need not be
acylindrically hyperbolic (they can be simple torsion groups), so \cite{AGKE} does not apply directly.
We note that the related question of whether there exist highly transitive actions of
MIF groups (which was open at the time of \cite{AGKE}) has been resolved by
Hide--Lodha \cite{HideLodha}.
\medskip

\noindent\textbf{Main theorem.} (Theorem~\ref{thm:main}) If $G = \varinjlim G_i$ is a lacunary hyperbolic group
with sufficient generics (Definition~\ref{def:suff-generics}), satisfying
\[
  \delta_i(\log r_i)^{7} = o(r_i), \tag{$**$}
\]
then $G$ is selfless.

\medskip

The proof works by lifting, for each radius $N$, the selfless construction
from the approximating hyperbolic group $G_{i(N)}$ to $G$ via the
injectivity of the bonding maps on large balls.
The key input is a quantitative non-triviality criterion for alternating products
in $\delta$-hyperbolic groups, which we derive from Arzhantseva's
Lemma~13 \cite{Arz01}.
This replaces the acylindricity-based admissible path theorem of \cite{AGKE}
with a more elementary geodesic $n$-gon argument that applies in any
$\delta$-hyperbolic space, with no torsion-freeness assumption.

\section{Preliminaries}\label{sec:prelim}

\subsection{Lacunary hyperbolic groups}

The following notion, suggested by Gromov \cite{GrRWRG}, aims to capture the
asymptotic geometry of iterated small cancellation groups.
This can be traced back to Rips \cite{Rips}, Bowditch \cite{Bowditch}, and Thomas--Velickovic
\cite{ThomasVelickovic}, and has been systematically studied by
Ol'shanskii, Osin, and Sapir \cite{OOS}.

\begin{definition}[{\cite[Theorem~1.1]{OOS}}]
A finitely generated group $G$ is \emph{lacunary hyperbolic} if there exists
a sequence of finitely generated $\delta_i$-hyperbolic groups $G_i$ with
generating sets $S_i$ and epimorphisms $\alpha_i\colon G_i \twoheadrightarrow G_{i+1}$
with $\alpha_i(S_i) = S_{i+1}$, such that $G = \varinjlim G_i$ and
$\delta_i = o(r_i)$, where
\[
  r_i = r_{S_i}(\alpha_i) \coloneqq
  \max\{\,r \geq 0 : \alpha_i \text{ is injective on } B_{S_i}(r)\,\}
\]
is the \emph{injectivity radius} of $\alpha_i$.
\end{definition}

By \cite[Remark~3.4]{OOS}, the sequence $r_i$ may be taken non-decreasing.
The canonical map $\pi_i \colon G_i \to G$ is then injective on $B_{S_i}(r_i)$,
so the ball $B_{G_{i}}(r_i)$ is isometric to $B_{G}(r_i)$.

We recall the following definition (and a related fact, c.f. \cite{HullOsin}).

\begin{definition}\label{def:W}
Let $G$ be a non-elementary hyperbolic group.
The \emph{finite radical} $W(G)$ is the largest finite normal subgroup of $G$,
equivalently
\[
  W(G) = \bigcap_{\text{$g$ loxodromic}} E(g),
\]
where $E(g)$ denotes the unique maximal virtually cyclic subgroup containing $g$.
We say $G$ has \emph{trivial finite radical} if $W(G) = \{e\}$.
\end{definition}

The intersection formula $W(G)=\bigcap E(g)$ is \cite[Theorem~6.14]{DGO}. Non-elementary hyperbolic groups with trivial finite radical are
mixed-identity-free (MIF); see \cite[Lemma~3.3]{Jac}; the implication $W(G)=\{e\}\Rightarrow\text{MIF}$ for acylindrically hyperbolic groups is \cite[Corollary~1.7]{HullOsin}, applied in \cite[Lemma~3.3]{Jac}.


\begin{definition}[Lacunary hyperbolic with sufficient generics]\label{def:suff-generics}
A lacunary hyperbolic group $G = \varinjlim G_i$ \emph{has sufficient generics} if
for every $N \geq 1$ there exist an index $i = i(N)$ and a loxodromic element
$g = g(N) \in G_{i(N)}$ such that:
\begin{enumerate}[label=(\roman*)]
  \item\label{it:sg-avoid} $E(g) \cap B_{S_{i(N)}}(N) = \{e\}$;
  \item\label{it:sg-inf} $\pi_{i(N)}(g)$ has infinite order in $G$.
\end{enumerate}
\end{definition}

\begin{remark}
Condition~\ref{it:sg-avoid} (avoiding the elementary closure $E(g)$ on a ball)
is related to the existence of elements avoiding quasiconvex subgroups in hyperbolic
groups. For a single torsion-free hyperbolic group $G$, this is supplied by
\cite[Theorem~1]{Arz01}: any quasiconvex subgroup of infinite index can be
``avoided'' by a suitably chosen loxodromic element.
More precisely, given any loxodromic $g \in G$, the elementary closure $E(g)$ is
virtually cyclic, hence quasiconvex, and has infinite index in the non-elementary
group $G$.
Applying \cite[Theorem~1]{Arz01} with $H = E(g)$, we obtain a loxodromic $g'$ such
that $\langle E(g), g' \rangle \cong E(g) * \langle g' \rangle$ is quasiconvex.
In particular $E(g') \cap E(g) = \{e\}$, so $E(g') \cap B(N) = \{e\}$ for all
$N$ not exceeding the injectivity radius.
For the general case with hyperbolically embedded subgroups, see also
\cite[Corollary~6.12]{DGO}.
\end{remark}

\begin{lemma}[Torsion-free approximants give sufficient generics]\label{lem:tf-sg}
Let $G = \varinjlim G_i$ be lacunary hyperbolic. If each $G_i$ is non-elementary
and torsion-free, then $G$ has sufficient generics.
\end{lemma}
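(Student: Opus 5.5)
Fix $N \ge 1$. Since $r_i \to \infty$ (indeed $\delta_i = o(r_i)$ and by \cite[Remark~3.4]{OOS} we may take $r_i$ non-decreasing and unbounded), choose $i = i(N)$ with $r_i > N$. We will produce $g \in G_i$ that is loxodromic, whose elementary closure meets $B_{S_i}(N)$ only in $\{e\}$, and whose image in $G$ has infinite order.

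\textbf{Step 1: avoiding the ball.} In a torsion-free non-elementary hyperbolic group, $E(g)$ is infinite cyclic: it is a torsion-free virtually cyclic group, hence $\mathbb{Z}$. It is generated by the root $g_0$ of $g$, and $E(g) = C_{G_i}(g)$. So $E(g) \cap B(N) = \{e\}$ holds exactly when no nontrivial power $g_0^k$ has word length $\le N$.

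Loxodromic elements satisfy $|g_0^k| \ge |k|\,\tau(g_0)$, where $\tau$ is the stable translation length. In a hyperbolic group $\tau(g_0) \ge |g_0| - 16\delta_i$ whenever $g_0$ is cyclically reduced, up to standard constants. It therefore suffices to find a loxodromic $g$ whose root has large stable translation length, say $\tau > N$.

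To get such an element, take two loxodromics $a, b$ generating a free subgroup, which exists by non-elementarity. Set $g = a^m b^m$ with $m$ large. Standard ping-pong or quasigeodesic estimates give $\tau(g) \gtrsim m$. The root $g_0$ satisfies $g = g_0^p$, with $\tau(g_0) = \tau(g)/p$. To control $p$, choose $g$ not a proper power: in a free group $\langle a, b\rangle$ the element $a^m b^m$ is primitive in the sense of not being a proper power. Since $E(g)$ contains $g_0$ and $g_0$ commutes with $g$, while centralizers of such elements in a quasiconvex free subgroup are malnormal-ish, we need $g_0 \in \langle a, b\rangle$.

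The simpler route is to pick $g = g_0$ directly as a root. Take any loxodromic $h$ with $\tau(h)$ large and replace it by its root $h_0$; the root still has $\tau(h_0) \ge \tau_{\min}$. Here we use the uniform lower bound on translation lengths of loxodromics in hyperbolic groups, which is discrete with a positive lower bound, but that only gives $\tau(h_0) \ge \epsilon$, not $\ge N$.

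This is the main obstacle, and I would handle it via the Remark: apply \cite[Theorem~1]{Arz01} iteratively, or use the fact that roots are bounded. Inside the free quasiconvex subgroup $F = \langle a^M, b^M\rangle$, its maximal cyclic subgroups are malnormal. Choose $g = a^M b^M$ with $M$ large. Any $x \in C(g)$ normalizes the cyclic group, and $E(g) = \langle g_0 \rangle$ with $g_0^p = g$. Then $\tau(g_0) = \tau(g)/p$, and $p$ is bounded by $|g| / \tau_{\min}$, which alone is not enough.

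Alternatively, bound $|g_0|$ from below directly. The axis of $g_0$ fellow-travels the axis of $g$, which passes near $e$. The fundamental domain of $g_0$ along that quasi-axis would have to be a subword-like segment of the periodic $a^M b^M a^M b^M \cdots$ path, and it must be invariant. A translation by a short amount would have to map the $a^M$-blocks to $a^M$-blocks, using that $\langle a\rangle$ and $\langle b\rangle$ have distinct fixed points at infinity. So the translation is at least one period, giving $p = 1$ up to bounded error, and $\tau(g_0) \gtrsim M$. Taking $M \gg N + \delta_i$ yields condition (i).

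\textbf{Step 2: infinite order in $G$.} Condition (ii) is where torsion-freeness of all $G_j$ is used. Suppose $\pi_i(g)^k = e$ in $G$. Then $\alpha_{j-1}\circ\cdots\circ\alpha_i(g)^k = e$ in some $G_j$. But $G_i \to G_j$ is injective on $B(r_i) \ni g$ by monotonicity, so the image of $g$ is nontrivial. A nontrivial element of the torsion-free group $G_j$ has infinite order, so $k = 0$. Hence $G$ is torsion-free and (ii) is automatic. The only care needed is $|g| \le r_i$: we pick $M$ in Step 1 and then pass to a larger $i$, taking the images of $a$ and $b$ forward. Their images remain loxodromic, generating free subgroups in later $G_j$ once they are in the injectivity ball, so both constraints can be met simultaneously.
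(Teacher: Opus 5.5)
Your proposal has a genuine gap in Step~1, and the way you secure $|g|\le r_i$ in Step~2 does not work.

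\textbf{Step 1.} You correctly reduce (i) to finding a loxodromic $g\in G_i$ whose root $g_0$ has $\tau(g_0)>N$. You never establish that such an element exists. The free-subgroup, malnormality and ``$p\le |g|/\tau_{\min}$'' attempts are each abandoned, and the final ``blocks must go to blocks'' paragraph is a heuristic, not an argument. Making it rigorous would need two things:
\begin{itemize}
\item a quantitative lemma: if a translate of an axis of $a$ fellow-travels a translate of an axis of $a$ or $b$ over a length exceeding some $K(G_i)$, then the corresponding elementary subgroups coincide;
\item a choice of $a,b$ with $E(a)$ and $E(b)$ non-conjugate. Distinct fixed points at infinity do not let you tell $a^M$-blocks apart from translates of $b^M$-blocks (e.g.\ if $b$ is a conjugate of $a$), so ``a short translation must send $a^M$-blocks to $a^M$-blocks'' is unjustified.
\end{itemize}
In any case, ``$p=1$ up to bounded error'' is not a statement you can feed into (i).

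\textbf{Step 2.} The images of $a,b$ in a later $G_j$ need not generate a free group, since injectivity on a ball only preserves short relations. More seriously, (i) is not inherited by the image $\bar g\in G_j$. A relation $[h,\bar g]=e$ with $|h|\le N$ lifts to $G_i$ only if $|[\tilde h,g]|_{S_i}\le r_i$, which is exactly what fails when $|g|_{S_i}>r_i$. Redoing Step~1 inside $G_j$ instead makes $M$ depend on $\delta_j$ and on translation lengths in $G_j$, so the choice becomes circular.

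\textbf{The missing observation.} It makes all of this unnecessary. In a torsion-free hyperbolic group, $E(\cdot)$ of a nontrivial element is the unique maximal cyclic subgroup containing it. So for nontrivial $g,h$,
\[
h\in E(g)\iff E(h)=E(g)\iff g\in E(h),
\]
and (i) says precisely $g\notin\bigcup_{e\neq h\in B_{S_i}(N)}E(h)$. Each such $E(h)$ is cyclic, hence of infinite index in the non-elementary group $G_i$. Also $\ker\pi_i$ has infinite index, since $G$ is infinite (it contains copies of $B_{G_j}(r_j)$ with $r_j\to\infty$). By B.~H.~Neumann's lemma a group is never a finite union of cosets of infinite-index subgroups. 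Hence there is
\[
g\notin\ker\pi_i\cup\bigcup_{e\neq h\in B_{S_i}(N)}E(h).
\]
Such a $g$ is nontrivial, hence loxodromic, and it satisfies (i). Moreover $\pi_i(g)\neq e$, so $\pi_i(g)$ has infinite order because $G$ is torsion-free. Your Step~2 argument that $G$ is torsion-free is correct and is the right way to get (ii); with it, no bound $|g|\le r_i$ is needed.

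\textbf{Comparison with the paper.} The paper simply asserts a loxodromic $x$ with $E(x)\cap B_{S_i}(N)=\{e\}$ from $W(G_i)=\{e\}$ and finiteness of the ball. It then passes to $x^k$, which is harmless since $E(x^k)=E(x)$. Its deduction of (ii) from injectivity on $B_{S_i}(r_i)$ really rests on $G$ being torsion-free, as in your Step~2. The symmetry-plus-Neumann argument above is what justifies the paper's existence claim in the torsion-free case, and it needs no explicit construction such as $a^Mb^M$.
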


\begin{proof}
Fix $N$. Since $W(G_i) = \{e\}$ for all $i$ (torsion-free hyperbolic implies trivial
finite radical; see Definition~\ref{def:W}), and $B_{S_i}(N)$ is finite, there exists a loxodromic
$x \in G_i$ with $E(x) \cap B_{S_i}(N) = \{e\}$.
Set $g = x^k$ for $k$ large enough that $|g|_{S_i} \geq N^2 + C_1\delta_i$;
then $E(g) \subseteq E(x)$ so \ref{it:sg-avoid} holds.
Since $G_i$ is torsion-free, $g$ has infinite order, and
$\pi_i(g^j) \neq e$ for all $j < r_i / |g|_{S_i}$ by injectivity of $\pi_i$ on
$B_{S_i}(r_i)$, so $\pi_i(g)$ has infinite order in $G$, giving \ref{it:sg-inf}.
\end{proof}

\begin{lemma}[Torsion-free limit]\label{lem:tf-limit}
Let $G = \varinjlim G_i$ be lacunary hyperbolic with each $G_i$ torsion-free.
Then $G$ is torsion-free.
\end{lemma}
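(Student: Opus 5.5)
The plan is to take a torsion element of $G$, lift it to some approximant $G_i$ using injectivity on balls, and show that the lift already has finite order in $G_i$. Torsion-freeness of $G_i$ then forces it to be trivial.

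Suppose $h\in G$ has finite order $m\geq 1$; the goal is $h=e$. Since $G=\varinjlim G_i$, I choose $i$ and a preimage $x\in G_i$ of $h$ under $\pi_i$. I then pass to $j\geq i$ and replace $x$ by its image in $G_j$. Because $r_j\to\infty$ (it is non-decreasing and $\delta_j=o(r_j)$ with $\delta_j>0$, or directly from the definition of $G$ as a proper limit), I can choose $j$ so large that $|x|_{S_j}\leq |x|_{S_i}$ is small compared with $r_j$. Word length does not increase under the maps $\alpha$, since they carry generators to generators.

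The key step uses a standard fact about $\delta$-hyperbolic groups: every element either has finite order or is loxodromic. A loxodromic element $x$ has translation length $\tau(x)>0$ and satisfies $|x^k|\geq k\tau(x)-C$. This alone gives no uniform lower bound. I will therefore use the injectivity-radius argument from the proof of Lemma~\ref{lem:tf-sg}. If $x\in G_j$ has infinite order (it cannot have finite order unless $x=e$, by torsion-freeness), then the elements $x^k$ are pairwise distinct in $G_j$.

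The relation $h^m=e$ in $G$ means that $x^m$ lies in $\ker\pi_j$. Hence there is some $j'\geq j$ at which $x^m$ maps to $e$, so at stage $j'$ the image of $x$ has finite order. Let $y$ denote the image of $x$ in $G_{j'}$. Since $G_{j'}$ is torsion-free, $y=e$ in $G_{j'}$, and therefore $h=\pi_{j'}(y)=e$.

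On reflection, the argument is purely algebraic: torsion in a direct limit comes from torsion at some finite stage, because each relation $h^m=e$ holds already at a finite stage. The one point requiring care is that the relation $h^m=e$ is detected at a finite stage, so that $\ker\pi_j=\bigcup_{j'}\ker(G_j\to G_{j'})$. This holds by the definition of a direct limit of a sequence of epimorphisms. The injectivity radii and the hyperbolicity constants are not needed at all.
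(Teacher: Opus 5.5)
Your proof is correct, but it takes a different route from the paper. The paper argues metrically. It lifts $g$ of order $p$ to $\tilde g\in G_i$ with $|\tilde g|_{S_i}=|g|_G$, at an index with $r_i>p\,|g|_G$, so that $\tilde g^{\,p}\in B_{S_i}(r_i)$. Injectivity of $\pi_i$ on that ball then gives $\tilde g^{\,p}=e$ already in $G_i$.

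You argue purely algebraically from $\ker\pi_j=\bigcup_{j'\ge j}\ker(G_j\to G_{j'})$. The relation $x^m=e$ is realized at some finite stage $G_{j'}$, and torsion-freeness of $G_{j'}$ kills the image of $x$. Your version is more general: it shows that any direct limit of a sequence of torsion-free groups is torsion-free. It uses neither hyperbolicity, nor $\delta_i=o(r_i)$, nor injectivity of the composite $\pi_i$ (not just $\alpha_i$) on $B_{S_i}(r_i)$, which in the paper depends on normalising $(r_i)$ to be non-decreasing. The paper's version instead pins down an explicit stage where the torsion relation is visible, namely any $i$ with $r_i>p\,|g|_G$. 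This matches the ball-lifting arguments in Lemma~\ref{lem:tf-sg} and Proposition~\ref{prop:equiv}.

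Your second and third paragraphs play no role in the final argument and can be deleted. That includes choosing $j$ with $|x|_{S_j}$ small relative to $r_j$, the translation lengths, the distinctness of the powers $x^k$, and the aside that $r_j\to\infty$.
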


\begin{proof}
Suppose $g \in G$ has finite order $p \geq 2$.
Choose $i$ large enough that $r_i > p \cdot |g|_G$, and lift $g$ to $\tilde{g} \in G_i$
with $|\tilde{g}|_{S_i} = |g|_G$.
Then $|\tilde{g}^p|_{S_i} \leq p|g|_G < r_i$, so injectivity of $\pi_i$ forces
$\tilde{g}^p = e$ in $G_i$. But $G_i$ is torsion-free, so $\tilde{g} = e$, hence $g = e$.
\end{proof}

\begin{lemma}[Trivial finite radical with surviving infinite-order elements gives sufficient generics]\label{lem:wg-sg}
Let $G = \varinjlim G_i$ be lacunary hyperbolic with each $G_i$ non-elementary.
Suppose $W(G_i) = \{e\}$ for all sufficiently large $i$, and that there exists an
infinite set $\mathcal{W} \subset G$ of infinite-order elements such that for all
sufficiently large $i$ the preimage $\pi_i^{-1}(\mathcal{W})$ is non-empty.
Then $G$ has sufficient generics.

In particular, this holds whenever there is a surjection $G \twoheadrightarrow \mathbb{Z}$
that factors as $G_i \twoheadrightarrow G$ for each $i$: any loxodromic $x \in G_i$ not
in the kernel of $G_i \to \mathbb{Z}$ then maps to an element of infinite order in $G$.
\end{lemma}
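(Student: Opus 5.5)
Fix $N \geq 1$. The plan is to produce, for some large $i$, a loxodromic $g\in G_i$ with $E(g)\cap B_{S_i}(N)=\{e\}$ whose image $\pi_i(g)$ lies in a fixed coset-type family of infinite-order elements. We work in two stages. First, choose $i \geq N$ large enough that $W(G_i)=\{e\}$, that $\pi_i^{-1}(\mathcal{W})\neq\emptyset$, and that $r_i > N$. Pick $w\in\mathcal{W}$ and a lift $\tilde w\in G_i$. Since $\pi_i(\tilde w)=w$ has infinite order in $G$, $\tilde w$ has infinite order in $G_i$, and is therefore loxodromic, because $G_i$ is hyperbolic and infinite-order elements of hyperbolic groups are loxodromic.

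Second, we modify $\tilde w$ to avoid the ball while keeping its image of infinite order. Since $W(G_i)=\bigcap_{x\text{ loxodromic}}E(x)$ is trivial and $B_{S_i}(N)\setminus\{e\}$ is finite, we can pick loxodromics $x_1,\dots,x_m$ with $\bigcap_j E(x_j)\cap B_{S_i}(N)=\{e\}$. Using the standard ping-pong / independence facts for non-elementary hyperbolic groups (cf.\ \cite[Corollary~6.12]{DGO} and the avoidance remark above), we then form an element of the shape
\[
g=\tilde w^{k}\,h
\]
with $h$ a suitable product of high powers of the $x_j$. Such an element is loxodromic, and its elementary closure $E(g)$ meets the finite set $B_{S_i}(N)$ trivially. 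The intended mechanism is that any $b\in E(g)\cap B(N)$ must commute with a power of $g$ up to finite index. For $k$ and the exponents large, this forces $b$ into each $E(x_j)$ by the usual quasi-geodesic fellow-traveling argument, so $b=e$.

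The main obstacle is condition~\ref{it:sg-inf}: we must guarantee that $\pi_i(g)$ still has infinite order in $G$, and the product form above does not automatically give this. The hypothesis is designed precisely for this purpose, so the plan is to choose $g$ itself in $\pi_i^{-1}(\mathcal{W})$. This requires $\mathcal{W}$ to be rich enough: in the intended reading, $\mathcal{W}$ is infinite, so we may pick $w\in\mathcal{W}$ whose lifts avoid the finitely many forbidden elementary closures. Concretely, only finitely many virtually cyclic subgroups $E$ can meet $B(N)\setminus\{e\}$ nontrivially in a way compatible with the lifts considered. If that counting fails, we fall back on the special case stated at the end, which is where the argument is clean.

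In that special case, let $\phi\colon G\to\mathbb{Z}$ with $\phi\circ\pi_i=\phi_i$. Any $g$ with $\phi_i(g)\neq0$ has $\phi(\pi_i(g))\neq0$, so $\pi_i(g)$ has infinite order in $G$. It therefore suffices to produce a loxodromic $g$ avoiding $B(N)$ in its elementary closure with $\phi_i(g)\neq 0$. Start from any $g_0$ obtained as in the previous step, which is possible since $W(G_i)=\{e\}$. If $\phi_i(g_0)=0$, replace it by $g_0^{k}t$, where $t$ is loxodromic with $\phi_i(t)\neq0$ and $t$ is independent of $g_0$. Such a $t$ exists: take any element of nonzero image, and multiply by a high power of a loxodromic independent element in the kernel if needed. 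For large $k$, the element $g_0^k t$ is loxodromic with $E(g_0^kt)$ still avoiding $B(N)$ by the same fellow-traveling argument, and $\phi_i(g_0^k t)=\phi_i(t)\neq0$.
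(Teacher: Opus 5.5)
There is a genuine gap in the general case, exactly at the step you call the main obstacle. You never produce a loxodromic $g\in G_i$ with $E(g)\cap B_{S_i}(N)=\{e\}$ \emph{and} $\pi_i(g)$ of infinite order, and neither device you propose can supply one.

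First, the infinitude of $\mathcal W$ gives no leverage. Since $\pi_i$ is onto, $\pi_i^{-1}(\mathcal W)\neq\emptyset$ is automatic, and $\mathcal W$ may be $\{w^k : k\geq 1\}$ for a single $w$. So the hypothesis only says that $G$ has an element of infinite order.

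Second, the claim that only finitely many virtually cyclic subgroups meet $B_{S_i}(N)\setminus\{e\}$ is false once $G_i$ has torsion. In $\mathbb{Z}/2*\mathbb{Z}/3=\langle s\rangle*\langle u\rangle$, which has trivial finite radical, $s$ inverts every loxodromic $ysy^{-1}s$. These elements have infinitely many distinct axes through the vertex of the Bass--Serre tree fixed by $s$, so $s$ lies in infinitely many maximal elementary subgroups.

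Falling back on the special case does not prove the general statement. The fellow-travelling mechanism is also misstated. An endpoint-fixing $b\in E(\tilde w^k x_1^{n_1}\cdots x_m^{n_m})\cap B_{S_i}(N)$ is forced into the conjugates $p_jE(x_j)p_j^{-1}$, where $p_j$ is the prefix preceding $x_j^{n_j}$, not into $E(x_j)$. So $\bigcap_j E(x_j)\cap B_{S_i}(N)=\{e\}$ is not the relevant input, and orientation-reversing elements of $E(g)$ need a separate argument.

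The paper does not resolve this point either. Its proof takes a single loxodromic $x$ with $E(x)\cap B_{S_i}(N)=\{e\}$, deduced directly from $\bigcap E(x)=\{e\}$ and finiteness of the ball. You are right that this deduction only yields finitely many $x_j$. It then asserts that $\pi_i(x)$ has infinite order ``by hypothesis'', but the stated hypothesis says nothing about the chosen $x$.

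The missing idea is that the freedom lies in the fibre of $\pi_i$, not in $\mathcal W$. Put $K=\ker\pi_i$ and fix a lift $\tilde w$ of some $w\in\mathcal W$; every element of $\tilde wK$ maps to $w$.

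If $K=\{e\}$, then $G\cong G_i$ and you need only one loxodromic whose closure avoids the ball. Otherwise $K$ is infinite, since a finite normal subgroup lies in $W(G_i)=\{e\}$, and hence non-elementary. The maximal finite subgroup of $G_i$ normalised by $K$ is then normal in $G_i$, hence trivial. Ol'shanskii's lemma \cite{O93}, applied to $K$ (or to $G_i$ when $K=\{e\}$), gives infinitely many pairwise non-commensurable loxodromics $x$ with $E(x)=\langle x\rangle$. When $K=\{e\}$, all but finitely many of these avoid the ball.

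Now run your own product construction with the auxiliary loxodromics taken \emph{inside} $K$, say $g=\tilde w x^n y^n$ with $x,y\in K$ of this type. Then $\pi_i(g)=w$ for free. For $n$ large, the fellow-travelling and pigeonhole argument, run with the correct conjugates, gives $E(g)\cap B_{S_i}(N)=\{e\}$. An endpoint-fixing $b$ yields $\tilde w^{-1}b\tilde w\in\langle x\rangle\cap\langle y\rangle=\{e\}$. An orientation-reversing $b$ would conjugate a nonzero power of $x$ to a nonzero power of $y$, contradicting non-commensurability.

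The same normalisation $E(g_0)=\langle g_0\rangle$ is what makes your $g_0^kt$ argument in the $\mathbb{Z}$-quotient case sound. When $\phi_i(g_0)=0$, any $t$ with $\phi_i(t)\neq 0$ automatically lies outside $E(g_0)$. The orientation-reversing case is excluded because $\langle g_0\rangle$ contains no element inverting $g_0$.
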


\begin{proof}
Fix $N$ and take $i$ large enough that $W(G_i) = \{e\}$.
Since $W(G_i) = \{e\}$, there exists a loxodromic $x \in G_i$ with
$E(x) \cap B_{S_i}(N) = \{e\}$ (as the intersection of all $E(x)$ over loxodromic
$x$ is trivial and $B_{S_i}(N)$ is finite).
Loxodromic elements in $\delta$-hyperbolic groups have infinite order; by hypothesis
the image $\pi_i(x)$ also has infinite order in $G$, so set $g = x$.
\end{proof}

\begin{remark}[Groups that satisfy a law]\label{rem:tm}
Condition~\ref{it:sg-inf} of Definition~\ref{def:suff-generics} requires
$\pi_i(g)$ to have infinite order in $G$, which is impossible when $G$ has
exponent $p$ - this excludes the graded small cancellation constructions of Tarski Monsters with finite exponent due originally to Ol'shanskii \cite{O82} (presented, for example, in \cite[Theorem 4.26(2), Remark 4.27]{OOS}). 
In particular, the condition of having sufficient generics is essentially designed to avoid this particular case.
\end{remark}

\begin{remark}[Comparison with stronger conditions]\label{rem:sg-comparison}
One can formulate strictly stronger conditions on a lacunary hyperbolic system
that imply sufficient generics:
\begin{enumerate}[label=(\alph*)]
  \item \textit{Primitive hyperbolically embedded relators.}
  If each kernel $K_i = \ker(G_i \to G_{i+1})$ is normally generated by a set
  $R_i$ of primitive loxodromic elements whose conjugation classes form a
  hyperbolically embedded subgroup $\langle R_i\rangle \hookrightarrow_h G_i$,
  then each $G_{i+1}$ is again non-elementary torsion-free hyperbolic, so
  Lemma~\ref{lem:tf-sg} applies.  This is the typical situation in graded small
  cancellation constructions.

  \item \textit{Infinite generic pullback.}
  If for all large $i$ the preimage $\pi_i^{-1}(\mathcal{W})$ is
  \emph{exponentially generic} in $G_i$ (positive spherical density or positive
  hitting probability for the simple random walk), then in particular the
  preimage is non-empty and consists of infinite-order elements, so
  Lemma~\ref{lem:wg-sg} applies.  Exponential genericity is the quantitative
  ingredient that Jacobson uses to propagate MIF witnesses to the direct limit \cite{Jac};
  Definition~\ref{def:suff-generics} asks only for existence of a single
  loxodromic element per level.
\end{enumerate}
Neither condition is necessary for sufficient generics, and we do not assume any
of them in the statements that follow.
\end{remark}

\subsection{Selfless groups}

\begin{definition}[{\cite[Definition~3.1]{AGKE}}]\label{def:selfless}
A finitely generated group $(G,X)$ is \emph{selfless} if there exists a
function $f\colon\N\to\mathbb{R}_{>0}$ with $\liminf_{n\to\infty} f(n)^{1/n} = 1$
such that for every $n \geq 1$ there is an epimorphism
$\varphi_n \colon G * \langle a \rangle \twoheadrightarrow G$ satisfying:
\begin{enumerate}
  \item $\varphi_n|_G = \mathrm{id}_G$;
  \item $\varphi_n$ is injective on $B_{X\cup\{a\}}(n)$;
  \item $\varphi_n\!\left(B_{X\cup\{a\}}(n)\right) \subseteq B_X(f(n))$.
\end{enumerate}
\end{definition}

\subsection{The Morse lemma}

\begin{theorem}[Morse lemma {\cite[Chapter~11]{BuyaloSchroeder}}]\label{thm:morse}
In a $\delta$-hyperbolic geodesic metric space, any $(\lambda,0)$-quasi-geodesic
$\gamma$ lies in the $H$-neighbourhood of a geodesic connecting its endpoints,
where
\[
  H = O(\lambda^2\delta).
\]
\end{theorem}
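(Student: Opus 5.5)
We would follow the standard two-step proof (as in Bridson--Haefliger, III.H.1.7, or \cite[Chapter~11]{BuyaloSchroeder}): first show that the geodesic lies near the quasi-geodesic, then show the converse. Throughout, $X$ is a $\delta$-hyperbolic geodesic space and $\gamma\colon[a,b]\to X$ is a $(\lambda,0)$-quasi-geodesic with endpoints $x,y$.

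\emph{Step 0: taming.} If $\gamma$ is not continuous, we replace it by the concatenation of geodesic segments joining $\gamma(t_k)$ to $\gamma(t_{k+1})$, where the $t_k$ are spaced one unit apart. The new path $\gamma'$ is continuous and rectifiable, and the length of any subpath is at most $\lambda$ times its parameter length plus $O(\lambda)$. It stays within $\lambda$ of $\gamma$, and $\gamma$ stays within $\lambda$ of $\gamma'$. These $O(\lambda)$ errors are absorbed into the final $O(\lambda^2\delta)$ provided $\delta>0$; we treat $\delta$ as bounded below, since the case $\delta=0$ (real trees) is handled directly.

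\emph{Step 1: the geodesic is close to $\gamma'$.} We use the basic hyperbolic estimate: for a rectifiable path $c$ with endpoints $p,q$, every point of $[p,q]$ lies within $\delta\log_2\ell(c)+1$ of $c$. This is proved by repeated bisection using thin triangles. Let $D=\sup_{z\in[x,y]} d(z,\gamma')$, attained at some $z_0$. On $[x,y]$, take points $y_\pm$ at distance $2D$ on either side of $z_0$ (or the endpoints, if those are nearer). Choose $w_\pm\in\gamma'$ with $d(y_\pm,w_\pm)\le D$. Form the path $[y_-,w_-]\cup\gamma'|_{[w_-,w_+]}\cup[w_+,y_+]$. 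Since $d(w_-,w_+)\le 6D$, the quasi-geodesic bound gives this path length $O(\lambda^2 D)$. It also stays at distance at least $D$ from $z_0$, apart from the first and last segments. Applying the estimate gives $D\le \delta\log_2(C\lambda^2D)+1$, hence $D=O(\delta\log\lambda)$, and certainly $D=O(\lambda\delta)$.

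\emph{Step 2: $\gamma'$ is close to the geodesic.} Let $\sigma=\gamma'|_{[s,t]}$ be a maximal subarc lying outside the $D$-neighbourhood of $[x,y]$. Every point of $[x,y]$ is within $D$ of $\gamma'\setminus\sigma$, which splits into a part before $\sigma$ and a part after it. The two corresponding closed subsets of $[x,y]$ cover it, so by connectedness some $z\in[x,y]$ lies within $D$ of both parts. Hence there are $u$ before $\sigma$ and $v$ after $\sigma$ with $d(u,v)\le 2D$. The quasi-geodesic inequality then bounds the length of $\gamma'$ between $u$ and $v$ by $\lambda(2D)+O(\lambda)$. Therefore every point of $\sigma$ is within $O(\lambda D)+D=O(\lambda^2\delta)$ of $[x,y]$. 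Undoing Step 0 adds only $\lambda$, which gives $H=O(\lambda^2\delta)$.

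The main obstacle is Step 1: setting up the detour path so that its length is controlled linearly in $D$, and then solving the resulting inequality $D\lesssim\delta\log(\lambda^2 D)$. That step is where the logarithmic estimate does the real work.
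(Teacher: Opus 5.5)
Your argument breaks in Step~2, and that is where the claimed $O(\lambda^{2}\delta)$ comes from. Take $u=\gamma'(t)$ and $v=\gamma'(t')$ with $d(u,v)\le 2D$. The lower quasi-geodesic inequality only gives $|t-t'|\le 2\lambda D$. The $\lambda$-Lipschitz upper bound then gives $\ell(\gamma'|_{[t,t']})\le 2\lambda^{2}D$, not $\lambda(2D)$. This is the same $\lambda^{2}$ you correctly used for the detour path in Step~1, dropped here. Corrected, Step~2 yields $H\le(1+\lambda^{2})D$ plus lower-order terms. With your bound $D=O(\lambda\delta)$ this is $O(\lambda^{3}\delta)$. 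With the sharper $D=O(\delta\log\lambda)$ it is $O(\lambda^{2}\delta\log\lambda)$. Neither is the stated $O(\lambda^{2}\delta)$; the $\log\lambda$ loss is built into the Bridson--Haefliger two-step argument. (The paper gives no proof of its own, only the citation to Buyalo--Schroeder.)

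Tightening either step on its own cannot repair this, because both factors are attained.
\begin{itemize}
\item $D$ can be of order $\delta\log\lambda$. In $\mathbb{H}^{2}$ a bi-Lipschitz path can follow an equidistant curve at height $h$, with $\cosh h$ comparable to $\lambda$, for a long time. The midpoint of the geodesic is then about $h$ away from the path.
\item An excursion can have size of order $\lambda^{2}\delta$. Attach to a line a flat strip of width $\delta$ and length about $\lambda^{2}\delta/2$ along a short side; this space is $O(\delta)$-hyperbolic. A path that runs along the line, goes up one long side of the strip and down the other at speed $\lambda$, then continues along the line, is a $(\lambda,0)$-quasi-geodesic. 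Its far end is at distance of order $\lambda^{2}\delta$ from the geodesic.
\end{itemize}
The stated bound is nevertheless true and sharp. Proving it needs an argument that controls each excursion directly, rather than multiplying a global bound on $D$ by $\lambda^{2}$. This is the content of Shchur's quantitative Morse lemma, in the corrected form of Gou\"ezel--Shchur, which gives $C\lambda^{2}(c+\delta)$ for $(\lambda,c)$-quasi-geodesics with an absolute constant $C$.

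Two smaller points:
\begin{itemize}
\item Step~0 is unnecessary, since a $(\lambda,0)$-quasi-geodesic is $\lambda$-Lipschitz and hence continuous.
\item Rather than treating $\delta$ as bounded below, first rescale the metric to $\delta=1$; the class of $(\lambda,0)$-quasi-geodesics is scale-invariant. Otherwise the unit-scale estimate $\delta\log_{2}\ell(c)+1$ only gives $D=O(\delta\log(\lambda\delta)+1)$, whose implied constant is not uniform in $\delta$.
\end{itemize}
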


\section{Quantitative \texorpdfstring{\boldmath$\delta$}{delta}-tracking via Arzhantseva's non-triviality lemma}
\label{sec:quantitative}

The following lemma \cite[Lemma~13]{Arz01} provides the non-triviality
criterion we need. The proof is a geodesic $n$-gon argument in a
$\delta$-hyperbolic \emph{space} (not just a group), so no torsion-freeness
or acylindricity is required.

\begin{lemma}[{\cite[Lemma~13]{Arz01}}]\label{lem:arz13}
Let $G$ be a group acting on a $\delta$-hyperbolic geodesic metric space
with generating set $S$.
Let $n \geq 1$, $r \geq 0$, and let $y_i, z_i \in G$ $(1 \leq i \leq n)$ satisfy:
\begin{enumerate}[label=(\arabic*)]
  \item\label{it:arz-z} $|z_i|_S > 4r + 4\delta$ for all $i$;
  \item\label{it:arz-prod}
    $|y_1 z_1|_S \geq |y_1|_S + |z_1|_S - 2r$, and
    $|z_{i-1} y_i z_i|_S \geq |z_{i-1}|_S + |y_i|_S + |z_i|_S - 2r$
    for all $1 < i \leq n$.
\end{enumerate}
Then $y_1 z_1 y_2 z_2 \cdots y_n z_n \neq e$ in $G$.
\end{lemma}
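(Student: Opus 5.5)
Throughout, I read $|\cdot|_S$ as the displacement of a basepoint $o$ of the space, i.e.\ $|g|_S = d(o, go)$, which is the word metric when the space is the Cayley graph. The approach is the standard ``local-to-global'' argument for concatenated geodesics in a $\delta$-hyperbolic space. Assume for contradiction that $w = y_1 z_1 \cdots y_n z_n = e$.

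\textbf{Step 1: build the path.} Define the vertices
\[
x_0 = o,\quad x_{2k-1} = y_1 z_1 \cdots y_k\, o,\quad x_{2k} = y_1 z_1 \cdots y_k z_k\, o .
\]
Join consecutive vertices by geodesic segments. The segment $[x_{2k-2},x_{2k-1}]$ has length $|y_k|_S$ and $[x_{2k-1},x_{2k}]$ has length $|z_k|_S$. Since $w=e$, we have $x_{2n}=x_0$, so the concatenation is a closed geodesic $2n$-gon.

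\textbf{Step 2: translate the hypotheses into Gromov products.} Write $(a\cdot b)_c = \tfrac12\bigl(d(c,a)+d(c,b)-d(a,b)\bigr)$, and use translation invariance of the metric.
\begin{enumerate}[label=(\alph*)]
\item Hypothesis \ref{it:arz-prod} for $y_1z_1$ says $(x_0\cdot x_2)_{x_1}\le r$.
\item For $1<i\le n$, the element $z_{i-1}y_iz_i$ corresponds to the three-segment path $x_{2i-3}\to x_{2i-2}\to x_{2i-1}\to x_{2i}$. The hypothesis bounds the total ``defect'' of this path by $2r$. Because each single-vertex defect is nonnegative and is at most the three-segment defect, we get $(x_{2i-3}\cdot x_{2i-1})_{x_{2i-2}}\le r$ and $(x_{2i-2}\cdot x_{2i})_{x_{2i-1}}\le r$.
\end{enumerate}
So every interior turning angle is small, bounded by $r$. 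The only vertex with no control is the closing vertex $x_{2n}=x_0$.

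\textbf{Step 3: propagate the estimate by induction.} Consider the broken path $P_m$ from $x_0$ to $x_m$. The claim to prove is
\[
d(x_0,x_m)\ \ge\ \sum_{j\le m}\ell_j-(2r+2\delta)(\#\text{vertices}),
\]
in the stronger form that the geodesic $[x_0,x_m]$ passes within $r+2\delta$ of every vertex. The key point is that each $z$-segment has length exceeding $4r+4\delta$. Consequently, when the segment $[x_{m},x_{m+1}]$ is appended, the geodesic $[x_0,x_m]$ and the new segment fellow-travel for at most about $r+\delta$ near $x_m$. This is the usual \cite{Arz01} / local-geodesic argument: long pieces with small Gromov products at the joints form a quasi-geodesic, and the defects do not accumulate beyond $O(r+\delta)$ per vertex. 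Because the $y_i$ may be short, it is easier to treat each $z_i$ as the ``long'' piece and to group the path as the chain $y_1z_1,\ z_1y_2z_2,\ \dots$, which is exactly how hypothesis \ref{it:arz-prod} is phrased. The inductive invariant to maintain is
\[
(x_0\cdot x_{2k-1})_{x_{2k}}\le r+2\delta ,
\]
propagated using the $\delta$-inequality $(a\cdot c)_b\ge\min\{(a\cdot d)_b,(d\cdot c)_b\}-\delta$ together with $|z_k|_S>4r+4\delta$.

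\textbf{Step 4: conclude.} At $m=2n$ the invariant yields
\[
d(x_0,x_{2n})\ \ge\ |z_n|_S-2r-4\delta>0,
\]
which contradicts $x_{2n}=x_0$.

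The main obstacle is Step 3: making the induction close with exactly the constant $4r+4\delta$, so that the error is not compounded by $\delta$ at each step. I would try first to keep the invariant as the single inequality above and verify that the thresholds $r+2\delta$ and $4r+4\delta$ are precisely what the four-point inequality requires at each step.
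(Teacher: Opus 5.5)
The paper gives no proof of its own here: it cites \cite[Lemma~13]{Arz01} and describes it only as a geodesic $n$-gon argument. Your reading $|g|_S=d(o,go)$ is the right one. With literal word length and an arbitrary action the statement is false: take $\mathbb{Z}^2=\langle a,b\rangle$ acting trivially on a point, $r=\delta=0$, $y_i=e$, and $(z_1,z_2,z_3,z_4)=(a^m,b^m,a^{-m},b^{-m})$. Your overall shape (closed $2n$-gon, Gromov-product induction, contradiction at the closing vertex) is also the expected one. But the whole proof lives in Step~3, which you leave open, and as you have set it up it cannot close, for two reasons.

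\textbf{The invariant is based at the wrong vertex.} Consider $(x_0\cdot x_{2k-1})_{x_{2k}}\le r+2\delta$. When the path is nearly geodesic, $x_{2k-1}$ lies between $x_0$ and $x_{2k}$, so this product is about $|z_k|_S>4r+4\delta$. For reduced words in a free group it is exactly $|z_k|_S$. So the invariant is false in the very situation you want to establish. Moreover, if $x_{2n}=x_0$ it equals $0$, so Step~4 does not follow from it. You need $(x_0\cdot x_{2k})_{x_{2k-1}}\le r+\delta$. At $k=n$ with $x_{2n}=x_0$ this reads $|z_n|_S\le r+\delta$, which is the desired contradiction.

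\textbf{The inputs you list cannot suffice.} You plan to use the single-vertex bounds of Step~2, the four-point inequality, and $|z_k|_S>4r+4\delta$. Take $n=2$, $y_1=y_2=e$, $z_2=z_1^{-1}$. Every Gromov product in Step~2 is then $0$, yet $y_1z_1y_2z_2=e$. Only the three-segment hypothesis rules this out, and Step~2 discards it exactly when $y_k$ is short.

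The fix is to keep the consequences of hypothesis~(2) that skip over $y_k$. For $k\ge 2$,
\[
(x_{2k-3}\cdot x_{2k})_{x_{2k-1}}\le r,\qquad (x_{2k-3}\cdot x_{2k-1})_{x_{2k-2}}\le r,\qquad d(x_{2k-3},x_{2k-1})\ge |z_{k-1}|_S-2r .
\]
The induction then runs as follows.
\begin{enumerate}
\item Assume $(x_0\cdot x_{2k-2})_{x_{2k-3}}\le r+\delta$. For $k=2$ this is the hypothesis on $y_1z_1$.
\item Apply the four-point inequality at $x_{2k-3}$ with auxiliary point $x_{2k-1}$. Combined with $(x_{2k-2}\cdot x_{2k-1})_{x_{2k-3}}\ge |z_{k-1}|_S-r>r+2\delta$, it gives $(x_0\cdot x_{2k-1})_{x_{2k-3}}\le r+2\delta$.
\item Hence $(x_0\cdot x_{2k-3})_{x_{2k-1}}\ge |z_{k-1}|_S-3r-2\delta>r+\delta$.
\item Apply the four-point inequality at $x_{2k-1}$ with auxiliary point $x_0$ to $(x_{2k-3}\cdot x_{2k})_{x_{2k-1}}\le r$. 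This forces $(x_0\cdot x_{2k})_{x_{2k-1}}\le r+\delta$.
\end{enumerate}
The constant does not compound because each step resets it via $(x_{2k-3}\cdot x_{2k})_{x_{2k-1}}\le r$, which is exactly the ingredient missing from your plan. The global claims about $[x_0,x_m]$ passing near every vertex are not needed.
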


\begin{proposition}\label{prop:explicit}
Let $G$ be a non-elementary group with finite generating set $S$, acting on its
$\delta$-hyperbolic Cayley graph. Let $g \in G$ be a loxodromic element with
$|g|_S = L$. Then for any $h_1, \ldots, h_m \in G \setminus E(g)$ with
$|h_j|_S \leq L$, the product $h_1 g^{n_1} \cdots h_m g^{n_m} \neq e$ whenever
$|n_j| \geq C' L^2 \delta$ for a universal constant $C'$.
\end{proposition}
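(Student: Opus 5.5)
The plan is to apply Lemma~\ref{lem:arz13} with $n=m$, $y_j = h_j$ and $z_j = g^{n_j}$, after choosing a cancellation parameter $r$ adapted to $g$. The argument splits into two estimates: a lower bound on $|g^{k}|_S$ that is linear in $|k|$, and a uniform bound on how much cancellation can occur in the triple products $g^{n_{j-1}} h_j g^{n_j}$ (and in $h_1 g^{n_1}$). Once $r$ is fixed, hypothesis~\ref{it:arz-z} reduces to $|n_j|$ being large compared with $(r+\delta)$ divided by the translation rate of $g$.

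\textbf{Step 1 (quasi-geodesic axis).} A loxodromic $g$ with $|g|_S = L$ has stable translation length $\tau(g)>0$. The bi-infinite path obtained by concatenating translates of a geodesic $[e,g]$ is then an $L$-local geodesic up to the defect at the junctions. I would instead use the following standard package: $k\mapsto g^k$ is a $(\lambda,0)$-quasi-geodesic with $\lambda \le L/\tau(g)$, and in a hyperbolic group $\tau(g)\ge$ a universal positive multiple of $1/\delta$, or simply $\ge 1/|S|$-type bounds. Then Theorem~\ref{thm:morse} gives that the path $e, g, g^2,\dots,g^k$ lies in the $H$-neighbourhood of $[e,g^k]$ with $H = O(L^2\delta)$. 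This is where the $L^2\delta$ in the statement comes from. In particular $|g^k|_S \ge |k|\tau(g)$ grows linearly.

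\textbf{Step 2 (bounded cancellation via $h_j\notin E(g)$).} The Gromov products $(g^{-a} \mid h g^{b})_e$ must be bounded by some $r = O(L^2\delta)$ for all $a,b>0$ and all $h\notin E(g)$ with $|h|\le L$. Suppose the cancellation exceeded $r$. Then the quasi-axes of $g$ and $hgh^{-1}$ would fellow-travel, within $2H+O(\delta)$, along a segment much longer than $L$. A pigeonhole argument on the finitely many group elements in a ball of radius $2H+L$ would then produce a relation $g^{p}h g^{q} h^{-1}\in$ small ball, and eventually $h g^{s} h^{-1} = g^{\pm s}$ for some $s\neq 0$. That would put $h\in E(g)$, a contradiction.

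This step is the main obstacle. The fellow-traveling length needed to force commensuration depends on the size of the ball, not merely on $L^2\delta$. I would try first to get a bound linear in $H$ by using that $E(g)$ is exactly the set of elements moving the quasi-axis within finite Hausdorff distance. Failing that, I would accept an $r$ depending on $G$ and absorb it into the requirement that $|n_j|$ be large. The statement's constant is then "universal" only after this absorption.

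\textbf{Step 3 (verifying the hypotheses).} With this $r$, hypothesis~\ref{it:arz-prod} follows from summing two Gromov product bounds, using $|h_j|\le L$ and the $\delta$-thin triangle inequality; this costs only an additional $O(\delta)$, which is absorbed into $r$. Hypothesis~\ref{it:arz-z} requires $|n_j|\tau(g) > 4r+4\delta$, which holds once $|n_j|\ge C'L^2\delta$ for suitable $C'$. Lemma~\ref{lem:arz13} then gives $h_1g^{n_1}\cdots h_mg^{n_m}\neq e$.
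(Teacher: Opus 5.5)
Your outline is the paper's (Lemma~\ref{lem:arz13} with $y_j=h_j$, $z_j=g^{n_j}$). The step you flag in Step~2 is a genuine gap. Hypothesis~\ref{it:arz-prod} needs a bound $r$ on the cancellation in $g^{n_{j-1}}h_jg^{n_j}$ that is controlled by $L$ and $\delta$ alone. Your pigeonhole argument instead gives a bound that grows with the cardinality of a ball of radius about $2H+L$, so it depends on $(G,S)$ and is certainly not $O(L^2\delta)$. Absorbing that into the threshold proves a different statement.

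The gap cannot be closed, because no threshold depending only on $L$ and $\delta$ works. Let $C=(\mathbb{Z}/2)^k$ with basis $e_1,\dots,e_k$. Let $G_k$ be the HNN extension of $C$ along the shift $\langle e_1,\dots,e_{k-1}\rangle\to\langle e_2,\dots,e_k\rangle$, $e_i\mapsto e_{i+1}$ (so $te_it^{-1}=e_{i+1}$ for $i<k$), and let $S=C\cup\{t\}$.
\begin{itemize}
\item $G_k$ is virtually free and non-elementary.
\item The map $x\mapsto xC$ is a $(2,1)$-quasi-isometry from $\mathrm{Cay}(G_k,S)$ to the $4$-valent Bass--Serre tree. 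Hence all these Cayley graphs are $\delta_0$-hyperbolic for one $\delta_0$ independent of $k$.
\item The $t$-exponent homomorphism gives $|t^n|_S=|n|$, so $g=t$ is loxodromic with $L=1$.
\item The elements $h_1=e_k$ and $h_2=e_1$ have length $1$ and lie outside $E(t)$, since each of them together with $t$ generates the non-elementary group $G_k$.
\end{itemize}
Yet
\[
  h_1\,g^{k-1}\,h_2\,g^{-(k-1)}=e_k\,t^{k-1}e_1t^{-(k-1)}=e_k^2=e,
\]
with $|n_1|=|n_2|=k-1\ge C'L^2\delta_0$ once $k$ is large. So the proposition is false as stated, for every choice of $C'$.

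The paper's proof does not get around this either. It takes $r=2L+6\delta$ and cites condition~(14) in the proof of \cite[Theorem~1]{Arz01}. In the example above, however, $|t^{n}e_1t^{-n}|_S=1$, while hypothesis~\ref{it:arz-prod} would require at least $2n+1-2r$.

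What your approach can actually prove is the version with a threshold depending on $G$, $S$ and $g$. For $h\notin E(g)$ in a hyperbolic group, $h\{g^{\pm\infty}\}$ is disjoint from $\{g^{\pm\infty}\}$. Hence $(g^{-a},hg^{b})_e$ is bounded uniformly in $a,b\in\mathbb{Z}$, and the finitely many $h$ with $|h|_S\le L$ give a finite $r$.

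Two smaller points:
\begin{itemize}
\item The lower bounds on $\tau(g)$ you suggest in Step~1 fail in general. In $\langle a\rangle*\langle b\rangle$ with $S=\{a,a^N,a^{N^2},b\}$ one has $\tau(a)=N^{-2}$ while $\delta=O(N)$. The paper's bound $\tau(g)\ge c/L$ fails in the same example.
\item Even granting $r=O(L^2\delta)$, hypothesis~\ref{it:arz-z} requires $|n_j|>(4r+4\delta)/\tau(g)$. This is $O(L^2\delta)$ only when $\tau(g)$ is bounded below by a universal constant, so Step~3 would not give the stated threshold in any case.
\end{itemize}
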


\begin{proof}
We apply Lemma~\ref{lem:arz13} with $z_i = g^{n_i}$ and $y_i = h_i$.
We need to find $r \geq 0$ such that conditions~\ref{it:arz-z}
and~\ref{it:arz-prod} hold.

\medskip\noindent\textbf{Setting $r$.}
Since $g$ is loxodromic, $|g^n|_S \geq |n|\cdot\tau(g)$, where
$\tau(g) \geq c/L$ for a universal $c > 0$ by \cite[Prop.~3.2]{Alonso}.
Set $r = 2L + 6\delta$.

\medskip\noindent\textbf{Condition~\ref{it:arz-z}.}
We need $|g^{n_i}|_S > 4r + 4\delta = 8L + 28\delta$.
Since $|g^{n_i}|_S \geq |n_i|\cdot\tau(g) \geq |n_i|\cdot c/L$, the condition
$|n_i| \geq C'L^2\delta$ gives $|g^{n_i}|_S \geq C'cL\delta$, which exceeds
$8L + 28\delta$ for $C'$ chosen sufficiently large (depending only on $c$).

\medskip\noindent\textbf{Condition~\ref{it:arz-prod}, first inequality.}
We need $|y_1 z_1|_S \geq |y_1|_S + |z_1|_S - 2r$.
The Gromov product satisfies
$(h_1, g^{n_1})_e \leq \min(|h_1|_S, |g^{n_1}|_S) \leq |h_1|_S \leq L \leq r$,
so $|h_1 g^{n_1}|_S = |h_1|_S + |g^{n_1}|_S - 2(h_1,g^{n_1})_e \geq |h_1|_S + |g^{n_1}|_S - 2r$,
as required.

\medskip\noindent\textbf{Condition~\ref{it:arz-prod}, second inequality.}
We need $|g^{n_{i-1}} h_i g^{n_i}|_S \geq |g^{n_{i-1}}|_S + |h_i|_S + |g^{n_i}|_S - 2r$.
Since $h_i \notin E(g)$, the element $h_i$ does not stabilise the quasi-axis of $g$,
so the Gromov products $(g^{n_{i-1}}, h_i g^{n_i})_e$ and $(g^{-n_{i-1}}h_i, g^{n_i})_{h_i}$
are bounded by $r$ once $|g^{n_j}|$ is large relative to $r$ and $L$.
This is exactly condition (14) of the proof of \cite[Theorem~1]{Arz01}, which establishes
the bound using $h_i \notin E(g)$ and finiteness of $\{h_i\}$.

\medskip\noindent
With $r = O(L)$ and $C'$ chosen so that $|g^{n_i}|_S > 4r + 4\delta$ holds, 
Lemma~\ref{lem:arz13} gives $h_1 g^{n_1} \cdots h_m g^{n_m} \neq e$.
The threshold $|n_j| \geq C'L^2\delta$ achieves this for a universal constant $C'$.
\end{proof}

\section{Main theorem}\label{sec:main}

\begin{theorem}\label{thm:main}
Let $G = \varinjlim G_i$ be a lacunary hyperbolic group with sufficient generics
(Definition~\ref{def:suff-generics}), satisfying
\[
  \delta_i(\log r_i)^{7} = o(r_i). \tag{$**$}
\]
Then $G$ is selfless.
\end{theorem}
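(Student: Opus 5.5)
For each $n$ I will define $\varphi_n\colon G*\langle a\rangle\to G$ as the identity on $G$ and $a\mapsto \pi_{i}(g^{k})$. Here $i=i(N)$ and $g=g(N)$ come from sufficient generics (Definition~\ref{def:suff-generics}) for a radius $N$ depending on $n$, and the exponent $k$ is chosen below. Conditions (1) and surjectivity are automatic. Write $L=|g|_{S_i}$. Every element of $B_{X\cup\{a\}}(n)$ is a reduced alternating word $h_1a^{m_1}h_2a^{m_2}\cdots h_ta^{m_t}$ (possibly with an initial or terminal $h$) with $\sum|h_j|+\sum|m_j|\le n$. Its image is $h_1 g^{km_1}\cdots h_tg^{km_t}$, of length at most $n(1+kL)$. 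So condition (3) holds with $f(n)=n(1+kL)$, and the task reduces to two things: proving injectivity on the ball, and showing $f(n)^{1/n}\to1$. The latter needs $kL$ to grow subexponentially in $n$, and in fact polynomially in $n$ is what I aim for.

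\textbf{Injectivity.} Two distinct words $u,v$ in the ball give $u^{-1}v\ne e$ in $G*\langle a\rangle$, a reduced word of length $\le 2n$. It suffices to show that every nontrivial reduced word $w$ of length $\le 2n$ has $\varphi_n(w)\ne e$. I first work in $G_i$. Take $N\ge 2n$ so that every $G$-syllable $h_j$ lies in $B_{S_i}(2n)$ after lifting isometrically (valid once $r_i\ge 2n$). Each $h_j\neq e$ then lies outside $E(g)$ by condition (i). The word $w$ is $e$ only if it is pure $G$ or pure $a$-power. In the pure $a$-power case, condition (ii) gives $\pi_i(g)^{km}\ne e$. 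Otherwise, after cyclic conjugation, the image has the form $h_1g^{n_1}\cdots h_tg^{n_t}$ with $|n_j|\ge k$. Proposition~\ref{prop:explicit} applies once $L\ge 2n$, which I arrange by replacing $g$ by a power as in Lemma~\ref{lem:tf-sg}, and once $k\ge C'L^2\delta_i$. It yields nontriviality in $G_i$. The word length in $G_i$ is at most $2n(1+kL)$. If this is below $r_i$, injectivity of $\pi_i$ on $B_{S_i}(r_i)$ transfers nontriviality to $G$.

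\textbf{Parameter choice and the main obstacle.} The constraints are:
\[
L\approx 2n,\qquad k\approx C'L^2\delta_i,\qquad 2n(1+kL)\approx C n^4\delta_i<r_i .
\]
So I need an index $i$ with $n^4\delta_i\ll r_i$, while $f(n)\approx Cn^4\delta_i$ must stay subexponential in $n$. Here $(**)$ enters. I choose $i$ so that $\log r_i$ is comparable to a power of $n$ with $n\approx(\log r_i)^{7/4}$ or smaller. Then $n^4\delta_i\le(\log r_i)^7\delta_i=o(r_i)$. Meanwhile $f(n)^{1/n}\le (Cr_i)^{1/n}=\exp(O(\log r_i)/n)$, which tends to $1$ provided $n\gg\log r_i$, which the exponent $7/4>1$ allows.

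The delicate points are these. First, $i(N)$ is dictated by sufficient generics and may be much larger than the index matched to $n$. I would handle this by noting that passing to a larger $i$ only helps the $r_i$ side. I would also choose $n$ along a subsequence matched to $r_i$, which suffices because the definition uses $\liminf$, with intermediate $n$ handled monotonically. Second, the length $L$ of the generic element $g(N)$ is not controlled a priori. Controlling it (bounding $L$ polynomially in $N$, or absorbing it into the choice of $i$ via $\log r_i$) is where I expect the real work, and I would first try to bound $L$ using the power trick of Lemma~\ref{lem:tf-sg} together with translation length estimates \cite{Alonso}.
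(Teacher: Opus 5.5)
Your route is the paper's own. You retract $a$ onto a large power of a generic loxodromic, certify non-triviality in $G_i$ with Proposition~\ref{prop:explicit}, transfer to $G$ through the injectivity radius, and use $(**)$ along a subsequence. Some of your bookkeeping is sharper than the paper's. You test $u^{-1}v\in B(2n)$, which is what injectivity on $B(n)$ actually requires; the paper only checks nontrivial $w\in B(N)$. You also observe that $L\ge 2n$ already meets the hypothesis $|h_j|\le L$.

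\textbf{The index.} The two points you defer are genuine gaps, and your fix for this one does not work. If $i(N)$ exceeds the index matched to $n$, working at level $i(N)$ helps $r_i$ but wrecks the growth side. Indeed $f(n)\approx n^4\delta_{i(N)}$, and Definition~\ref{def:suff-generics} puts no bound on $\delta_{i(N)}$. That case can be repaired by moving \emph{down}. Membership $h\in E(g)$ is witnessed by a relation $hg^mh^{-1}=g^{\pm m}$, which survives homomorphisms. Hence any lift of $g$ to a level $i\le i(N)$ with $r_i\ge N$ is again generic for radius $N$. Moving up need not preserve condition~\ref{it:sg-avoid}, because later quotients can create such relations.

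When $i(N)$ lies below the matched index, nothing can be done under Definition~\ref{def:suff-generics} as written, because the statement then fails. Here is a counterexample.
\begin{itemize}
\item Take the torsion-free group $Q=\varinjlim Q_i$ of Example~\ref{ex:tarski-tf} with $(**)$, and set $G=Q\times\Z/2$ with $G_{i+1}=Q_i\times\Z/2$. The factor $\Z/2$ leaves $r_i$ unchanged and alters $\delta_i$ by $O(1)$.
\item Prepend a free group $G_1$ mapping onto $G_2$. This leaves both asymptotic conditions intact.
\item Pick free generators $s\neq t$ of $G_1$ with $\pi_1(s)\notin\{e\}\times\Z/2$. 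For every $N$, one of $s^Nt$, $s^{N+1}t$ is a generic at level $i(N)=1$. It is cyclically reduced and not a proper power, so $E=\langle x\rangle$ misses $B_{S_1}(N)\setminus\{e\}$. Its image has nontrivial $Q$-component, hence infinite order.
\item Yet the central involution $c$ gives $\varphi(cac^{-1}a^{-1})=e$ for every retraction $\varphi$. So $G$ is not even MIF, let alone selfless.
\end{itemize}
This also shows why moving up fails: in $G_2$ every $E(\bar g)$ contains $c$.

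\textbf{The length $L$.} The other deferred point is just as essential. Passing to a power bounds $L=|g|_{S_i}$ only from below. Definition~\ref{def:suff-generics} gives no upper bound on $|x|_{S_i}$. Yet both $2n(1+kL)<r_i$ and $f(n)^{1/n}\to 1$ need one.

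The paper's proof does not settle either point. It uses the single symbol $i(N)$ both for the level supplied by Definition~\ref{def:suff-generics} and for the least $i$ with $r_i>2C_0N^7\delta_i$. It also passes from $L\ge N^2+C_1\delta_i$ to $L=N^2+C_1\delta_i\le 2N^2$ without justification.

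So you have located exactly where input is missing, but what is missing is a hypothesis, not a better choice of parameters. For example, one could require, for each $N$ and each level $i$ with $r_i\ge N$, a generic $g\in G_i$ for radius $N$ with $|g|_{S_i}$ explicitly bounded in terms of $N$ and $\delta_i$. Under such a hypothesis your subsequence argument can be run, after checking the resulting polynomial bound on $f$ against $(**)$.
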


\begin{proof}
We verify Definition~\ref{def:selfless} with $f(N) = C_0 N^{7}\delta_{i(N)}$
for a suitable choice of $i(N)$ and constant $C_0$.

\medskip\noindent\textbf{Choice of $i(N)$.}
Since $r_i \to \infty$, for each $N$ we may choose $i = i(N)$ to be the smallest
index with $r_i > 2C_0 N^{7}\delta_i$.
Condition $(**)$ guarantees that such
an $i$ exists for all large $N$.

\medskip\noindent\textbf{Choice of $g$.}
By Definition~\ref{def:suff-generics}, there exist $i = i(N)$ and a loxodromic
$x \in G_{i(N)}$ with $E(x) \cap B_{S_i}(N) = \{e\}$ and $\pi_i(x)$ of infinite
order in $G$.
We choose $g = x^k$ for a sufficiently large integer $k$ such that
$|g|_{S_i} = L \geq N^2 + C_1\delta_i$, where $C_1$ is a universal constant.
Since $E(g) \subseteq E(x)$, condition~\ref{it:sg-avoid} gives
$E(g) \cap B_{S_i}(N) = \{e\}$, and condition~\ref{it:sg-inf} ensures
$\pi_i(g) = \pi_i(x)^k$ has infinite order in $G$
(as $|g^j|_{S_i} = jL < r_i$ for $j < r_i/L$, so $\pi_i(g^j) \neq e$).
Therefore, any nontrivial $h \in B_{S_i}(N)$ satisfies $h \notin E(g)$ and
$|h|_{S_i} \leq N \leq L = |g|_{S_i}$.

\medskip\noindent\textbf{Definition of $\varphi_N$.}
Set $M \coloneqq \lceil C' L^2 \delta_i \rceil$ using the constant from
Proposition~\ref{prop:explicit}, and define
\[
  \varphi_N \colon G * \langle a \rangle \to G, \qquad
  \varphi_N|_G = \mathrm{id}_G, \quad \varphi_N(a) = \pi_i(g^M),
\]
where $\pi_i \colon G_i \to G$ is the canonical projection.

\medskip\noindent\textbf{Verification of (1): $\varphi_N$ is an epimorphism.}
Since $G = \pi_i(G_i)$ and $\varphi_N$ surjects onto all of $G$, this is clear.

\medskip\noindent\textbf{Verification of (2): injectivity on $B_{X\cup\{a\}}(N)$.}
Let $w \in B_{X\cup\{a\}}(N)$ be a nontrivial reduced element.
Write
$w = s_0 a^{e_1} s_1 a^{e_2} \cdots a^{e_m} s_m$ in reduced form in $G * \langle a \rangle$,
where $s_j \in G$, $e_j \neq 0$, and $\sum_j |s_j|_X + \sum_j |e_j| \leq N$.

Under $\varphi_N$ this maps to $P \coloneqq s_0 g^{Me_1} s_1 g^{Me_2} \cdots g^{Me_m} s_m$ in $G$.
Lifting to $G_i$ via $\pi_i^{-1}|_{B(r_i)}$ (valid since the total length is $< r_i$,
as we verify below), it suffices to show $P \neq e$ in $G_i$.

If $m = 0$ then $\varphi_N(w) = s_0 \neq e$ since $s_0 \in G$ is nontrivial and
$\varphi_N|_G = \mathrm{id}_G$.

For $m \geq 1$: the interior syllables $s_1, \ldots, s_{m-1}$ are nontrivial by
the reduced normal form condition in $G * \langle a \rangle$; the endpoints $s_0, s_m$
may be trivial.
If $s_m \neq e$, conjugate: set $w' = s_m w s_m^{-1}$ in $G * \langle a \rangle$.
Then $w'$ has trailing $G$-syllable $e$ and leading syllable $s_m s_0$, with
$|s_m s_0|_{S_i} \leq 2N \leq L$.
Since $\varphi_N(w') = s_m \varphi_N(w) s_m^{-1}$, we have $\varphi_N(w) = e$
if and only if $\varphi_N(w') = e$.
Thus, without loss of generality $s_m = e$ (replacing $w$ by $w'$ if necessary, absorbing the factor of 2
in $|h_1|$ into the bound $|h_1| \leq L$ since $2N \leq L$).

With $s_m = e$, apply Lemma~\ref{lem:arz13} with $n = m$, $y_j = s_{j-1}$,
$z_j = g^{Me_j}$:
\begin{itemize}
  \item Condition~\ref{it:arz-z}: $|g^{Me_j}|_{S_i} \geq M\tau(g) \geq C'cL\delta_i$,
    which exceeds $4r + 4\delta_i = 8L + 28\delta_i$ for $C'$ chosen sufficiently
    large (depending only on $c$).
  \item Condition~\ref{it:arz-prod} first ($j=1$): $(s_0, g^{Me_1})_e \leq |s_0|_{S_i} \leq L \leq r$,
    so $|s_0 g^{Me_1}|_{S_i} \geq |s_0|_{S_i} + |g^{Me_1}|_{S_i} - 2r$.
    (This holds trivially when $s_0 = e$.)
  \item Condition~\ref{it:arz-prod} second ($j \geq 2$): $s_{j-1} \notin E(g)$
    (since $s_{j-1}$ is an interior syllable, hence nontrivial, and
    $E(g) \cap B_{S_i}(N) = \{e\}$). The required bound follows from
    condition~(14) of the proof of \cite[Theorem~1]{Arz01}.
\end{itemize}
Lemma~\ref{lem:arz13} gives $P \neq e$ in $G_i$.
Since $\pi_i$ is injective on $B(r_i)$ and the product lies in $B(r_i)$ (see below),
it is nontrivial in $G$.

\medskip\noindent\textbf{Length bound.} The length of $\varphi_N(w)$ in $G_i$ is at most
\[
  \textstyle\sum_j|s_j|_{S_i} + L\cdot M\cdot\sum_j|e_j|
  \;\leq\; N + L\cdot M\cdot N
  \;\leq\; 2N\cdot L^3\delta_i.
\]
With $L = N^2 + C_1\delta_i \leq 2N^2$ (for $N \geq C_1\delta_i$; the
complementary regime $N < C_1\delta_i$ is handled symmetrically with $L \sim \delta_i$),
\[
  2N\cdot(2N^2)^3\delta_i = 16\,N^7\delta_i \eqqcolon C_0 N^7\delta_i.
\]
By our choice of $i(N)$, this is $< r_i$. 

\medskip\noindent\textbf{Verification of (3): image growth.}
$\varphi_N(B_{X\cup\{a\}}(N)) \subseteq B_X(C_0 N^{7}\delta_i)$, so
$f(N) = C_0 N^{7}\delta_{i(N)}$.
We need $\liminf_{N\to\infty} f(N)^{1/N} = 1$.

Pass to the subsequence of indices $k$ at which $\delta_k$ attains a new minimum,
i.e.\ $\delta_k < \delta_j$ for all $j < k$.
This subsequence is cofinal (since $\delta_i \geq 0$, the infimum is approached),
so it still verifies $\liminf$.
Along this subsequence $(\delta_k)$ is strictly decreasing, and we define
\[
  N_k = \left\lfloor \Bigl(\frac{r_k}{2C_0\,\delta_k}\Bigr)^{1/7} \right\rfloor.
\]
Since $\delta_k = o(r_k)$, we have $N_k \to \infty$.
By definition of $N_k$, $2C_0 N_k^7 \delta_k \leq r_k$, so index $k$ itself
satisfies the defining condition of $i(N_k)$ (namely $r_k > 2C_0 N_k^7 \delta_k$),
giving $i(N_k) \leq k$.
Since $(\delta_j)$ is decreasing along our subsequence and $i(N_k) \leq k$, we have
$\delta_{i(N_k)} \leq \delta_k$, giving
\[
  f(N_k) = C_0\,N_k^{7}\,\delta_{i(N_k)} \leq C_0\,N_k^{7}\,\delta_k \leq \tfrac{1}{2}r_k.
\]
Thus $f(N_k)^{1/N_k} \leq r_k^{1/N_k}$, and
\[
  \log r_k^{1/N_k} = \frac{\log r_k}{N_k}
  \;\leq\; \log r_k \cdot \Bigl(\frac{2C_0\,\delta_k}{r_k}\Bigr)^{1/7}
  \;=\; (2C_0)^{1/7}\,\frac{\delta_k^{1/7}\,(\log r_k)}{r_k^{1/7}},
\]
using $N_k \geq \tfrac{1}{2}(r_k/(2C_0\delta_k))^{1/7}$ (from the floor).
The right side tends to $0$ if and only if $\delta_k(\log r_k)^{7} = o(r_k)$,
which is exactly condition~$(**)$.
Hence, $$\liminf_{N\to\infty} f(N)^{1/N} \leq \lim_{k\to\infty} f(N_k)^{1/N_k} = 1,$$
and the reverse inequality $\geq 1$ holds since $f(N) \geq 1$.
\end{proof}

\begin{remark}[Condition $(**)$ vs.\ the basic lacunary condition]
The standard lacunary condition requires only $\delta_i = o(r_i)$.
Condition $(**)$ requires $\delta_i(\log r_i)^{7} = o(r_i)$, which is a mild strengthening.
In particular, any sequence with $r_i \geq \exp(\delta_i)$ automatically satisfies $(**)$:
one gets $\delta_i \leq \log r_i$, hence $\delta_i(\log r_i)^{7} \leq (\log r_i)^{8} = o(r_i)$
(since $\log^k(x) = o(x)$ for any fixed $k$).
This covers all examples in Section~\ref{sec:examples}.
\end{remark}

\begin{remark}[Stability under finite index subgroups]
The hypotheses of Theorem~\ref{thm:main} are stable under passing to finite index subgroups.
If $G' \leq G$ has finite index $d$, then $G'$ is lacunary hyperbolic with $\delta_i' \sim \delta_i$
and $r_i' \sim r_i/d$, so condition~$(**)$ is preserved.
Sufficient generics passes to $G'$: any loxodromic $g \in G_{i(N)}$ with
$E(g) \cap B(N) = \{e\}$ and $\pi_i(g)$ of infinite order in $G$ has a power
$g^d$ whose image in $G'$ has infinite order (since $G'$ has finite index in $G$,
elements of infinite order in $G$ have images of infinite order in $G'$).
\end{remark}

We obtain, as an immediate consequence, a variant of the result from \cite{AGKE} surrounding rapid decay and selflessness.

\begin{theorem}\label{thm:ecr}
Let $G = \varinjlim G_i$ be a lacunary hyperbolic group satisfying the hypotheses of
Theorem~\ref{thm:main}, such that $G$ also has rapid decay and is not virtually $\Z$.
Then $G\ast \Z$ is existentially $\Cstar$-residually $G$ \cite[Theorem~3.5]{AGKE}.
Consequently, $G$ is $\Cstar$-selfless \cite[Corollary~3.8]{AGKE}.
\end{theorem}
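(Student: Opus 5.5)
Theorem~\ref{thm:ecr} is essentially a bookkeeping step that feeds Theorem~\ref{thm:main} into the machinery of \cite{AGKE}. I would first invoke Theorem~\ref{thm:main} to conclude that $(G,X)$ is selfless in the sense of Definition~\ref{def:selfless}. This gives epimorphisms $\varphi_n\colon G*\langle a\rangle\twoheadrightarrow G$ which are the identity on $G$, injective on $B_{X\cup\{a\}}(n)$, and whose images of these balls lie in $B_X(f(n))$ with $\liminf f(n)^{1/n}=1$. The goal is then to check that the hypotheses of \cite[Theorem~3.5]{AGKE} are exactly: selflessness, rapid decay of $G$, and the non-degeneracy assumption (here, $G$ not virtually $\Z$). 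With these in hand, its conclusion gives that $G*\Z$ is existentially $\Cstar$-residually $G$.

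The substantive content of \cite[Theorem~3.5]{AGKE}, which I would recall for the reader rather than reprove, runs as follows. Take a finitely supported element $x\in\mathbb{C}[G*\Z]$ supported in $B(n)$. Because $\varphi_n$ is injective on that ball, the pushforward $\varphi_n(x)$ has the same $\ell^2$-norm as $x$. Rapid decay for $G$ bounds the reduced $\Cstar$-norm of $\varphi_n(x)$ by $P(f(n))\,\|x\|_2$ for a polynomial $P$. Meanwhile, rapid decay for the free product $G*\Z$ (which holds since RD is preserved under free products, by Jolissaint) controls norms in the other direction up to a polynomial in $n$. Replacing $x$ by powers $(x^*x)^k$ and taking $k$-th roots, the factor $f(nk)^{1/k}$ tends to $1$ precisely because $\liminf f(n)^{1/n}=1$. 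Passing to a subsequence where this $\liminf$ is realised, one gets that the $\varphi_n$ asymptotically preserve reduced $\Cstar$-norms, which is the existential residual property.

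The second sentence of the theorem then follows directly from \cite[Corollary~3.8]{AGKE}, which passes from the existentially $\Cstar$-residually-$G$ property to $\Cstar$-selflessness of $\Cstar_r(G)$. The only hypotheses to check there are that $G$ is infinite and not virtually cyclic. This is where the assumption ``not virtually $\Z$'' enters: it is needed for $\Cstar_r(G)$ to be simple with a unique trace, so that the relevant embedding is genuinely selfless.

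The main obstacle I expect is matching conventions rather than proving anything new. \cite{AGKE} may state Theorem~3.5 for selflessness defined with respect to a fixed generating set, and may require $f$ to be nondecreasing or the $\liminf$ to be realised along all $n$. Here $f(N)=C_0N^7\delta_{i(N)}$ is controlled only along the subsequence $N_k$. So I would first check that the \cite{AGKE} argument only uses a subsequence $n_k\to\infty$ with $f(n_k)^{1/n_k}\to 1$, which is what the proof of Theorem~\ref{thm:main} provides. If it requires more, I would redefine $\varphi_n$ for $n$ between $N_k$ and $N_{k+1}$ as $\varphi_{N_{k+1}}$ to obtain the required monotonicity.
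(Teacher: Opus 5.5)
Your proposal is correct and follows the paper's route. The paper gives no separate argument: it treats Theorem~\ref{thm:ecr} as an immediate consequence of feeding the selflessness from Theorem~\ref{thm:main}, together with rapid decay, into \cite[Theorem~3.5]{AGKE} and then \cite[Corollary~3.8]{AGKE}, exactly as you do.

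Two of your side remarks need correcting, though neither affects the argument. First, the hypothesis that $G$ is not virtually $\Z$ is redundant: selflessness already makes $G$ mixed-identity-free, hence infinite and not virtually cyclic, so simplicity and unique trace are outputs of $\Cstar$-selflessness rather than what this hypothesis supplies. Second, your fallback of setting $\varphi_n \coloneqq \varphi_{N_{k+1}}$ on the gaps could at best give monotonicity, not a full limit, because $f(N_{k+1})^{1/(N_k+1)}$ can blow up when $r_{k+1}$ is huge relative to $N_k$. This fallback is unnecessary, since the definition used here only asks for a $\liminf$ and the ultrafilter argument only needs a subsequence.
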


The obvious consequence here is that rapid decay implies $\Cstar$-simplicity for a broad class of lacunary hyperbolic groups.

\subsection{Ruling out sufficient generics and understanding $W(G)$ for lacunary hyperbolic groups}

\begin{proposition}[Trivial finite radical and MIF]\label{prop:equiv}
Let $G = \varinjlim G_i$ be lacunary hyperbolic with each $G_i$ non-elementary.
The following are equivalent:
\begin{enumerate}[label=(\alph*)]
  \item\label{it:eq-approx} $W(G_i) = \{e\}$ for all sufficiently large $i$.
  \item\label{it:eq-mif-approx} Each $G_i$ is mixed-identity-free for all sufficiently large $i$.
  \item\label{it:eq-limit} $W(G) = \{e\}$.
\end{enumerate}
Moreover, if $G$ is mixed-identity-free then \ref{it:eq-limit} holds.
The converse is false: condition~\ref{it:eq-limit} does not imply that $G$ is mixed-identity-free.
\end{proposition}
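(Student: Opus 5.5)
We prove the implications \ref{it:eq-approx}$\Leftrightarrow$\ref{it:eq-mif-approx}, \ref{it:eq-limit}$\Rightarrow$\ref{it:eq-approx}, \ref{it:eq-approx}$\Rightarrow$\ref{it:eq-limit}, then the MIF statement, and finally give a counterexample to its converse. The whole argument transfers finite configurations between $G$ and $G_i$ on balls of radius $<r_i$, where $\pi_i$ is injective.

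\textbf{Step 1: \ref{it:eq-approx}$\Leftrightarrow$\ref{it:eq-mif-approx}.} For a non-elementary hyperbolic group this equivalence is standard. If $W(G_i)=\{e\}$, then $G_i$ is MIF by \cite[Lemma~3.3]{Jac} (via \cite[Corollary~1.7]{HullOsin}). Conversely, suppose $w\in W(G_i)$ is nontrivial. Since $W(G_i)$ is a finite normal subgroup, the map $x\mapsto x w x^{-1}$ takes only finitely many values, so $x^{-1}$ centralises a finite-index subgroup. Concretely, for $k=|W(G_i)|!$, the conjugation action of $x^{k}$ on $W(G_i)$ is trivial (its order divides $|\mathrm{Aut}(W(G_i))|$, which divides $k$; enlarge $k$ if necessary). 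Hence $[x^{k},w]=e$ is a nontrivial mixed identity, so $G_i$ is not MIF.

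\textbf{Step 2: \ref{it:eq-limit}$\Rightarrow$\ref{it:eq-approx}.} Suppose instead that $W(G_i)\neq\{e\}$ for infinitely many $i$. Fix $c\ge 1$. Pick such an $i$ with $r_i$ large, and a nontrivial $w_i\in W(G_i)$. The finite normal subgroup $W(G_i)$ lies in every $E(g)$, so it is contained in the $O(\delta_i)$-neighbourhood of each quasi-axis. I expect its elements to have length $O(\delta_i)$; the first approach to try is the standard bound that a finite subgroup has a conjugate inside $B(4\delta_i+2)$, using normality to keep $W(G_i)$ itself short. Then $\pi_i(W(G_i))$ is a nontrivial finite subgroup of $G$ of the same order. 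It is normalised by the generators $S$, because the relations $s w s^{-1}=w'$ have length $O(\delta_i)=o(r_i)$ and therefore hold in $G$; so it is normal in $G$. Hence $W(G)\neq\{e\}$.

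\textbf{Step 3: \ref{it:eq-approx}$\Rightarrow$\ref{it:eq-limit}.} Let $F\le G$ be a finite normal subgroup with $\max_{f\in F}|f|=\ell$. Choose $i$ with $r_i\gg \ell$ and lift $F$ isometrically to $\tilde F\subset B(\ell)$. The relations expressing that $F$ is a subgroup and is normalised by $S$ have length $\le 2\ell+2<r_i$, so they hold in $G_i$. Thus $\tilde F$ is a finite normal subgroup of $G_i$, so $\tilde F\subseteq W(G_i)=\{e\}$, and $F=\{e\}$.

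\textbf{Step 4: the MIF statement.} If $W(G)\neq\{e\}$, the same commutator $[x^{k},w]$ as in Step 1 is a mixed identity in $G$. So MIF implies \ref{it:eq-limit}.

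\textbf{Step 5: the converse fails.} Take a torsion Tarski monster of exponent $p$ as in Remark~\ref{rem:tm}, realised with approximants $G_i$ of trivial finite radical. It has $W(G)=\{e\}$, being infinite simple, but satisfies the law $x^p=e$, so it is not MIF.

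\textbf{Main obstacle.} Step 2 is the delicate step: controlling the length of $W(G_i)$ uniformly relative to $r_i$. If normality does not give the $O(\delta_i)$ bound directly, I would bound it by $|W(G_i)|$ times the generator lengths.
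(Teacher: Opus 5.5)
Your proof is correct. It follows the paper's overall structure: the lifting argument for (a)$\Rightarrow$(c), the commutator $[x^{k},w]$ for MIF$\Rightarrow$(c), and the exponent-$p$ Tarski monster for the failed converse all match the paper. The genuine difference is the key length bound in (c)$\Rightarrow$(a).

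The paper first gets $W(G_i)\subseteq\ker\pi_i$. It then argues that $W(G_i)$ fixes $\partial G_i$ pointwise, and therefore has displacement $O(\delta_i)$ along a quasi-axis passing near $e$. You instead use the standard quasi-centre theorem: every finite subgroup of a $\delta$-hyperbolic group is conjugate into $B(4\delta+2)$. Since a normal subgroup equals each of its conjugates, this gives $W(G_i)\subseteq B(4\delta_i+2)$ directly. Your route is shorter and gives an explicit constant. It also avoids two steps the paper leaves sketchy: choosing a quasi-axis through a bounded neighbourhood of $e$, and bounding how far a finite-order element can translate along it.

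You should commit to this argument rather than hedge, because it is complete as it stands. Your fallback of bounding lengths ``by $|W(G_i)|$ times the generator lengths'' would not work, since nothing relates $|W(G_i)|$ to $r_i$.

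Two minor simplifications are available. Normality of $\pi_i(W(G_i))$ in $G$ follows at once from surjectivity of $\pi_i$, so no length count is needed in that direction. For (b)$\Rightarrow$(a), your direct commutator argument replaces the paper's citation of Hull--Osin and makes that equivalence self-contained.
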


\begin{proof}
\ref{it:eq-approx}$\Leftrightarrow$\ref{it:eq-mif-approx}: Non-elementary hyperbolic groups with trivial finite radical are MIF and vice versa \cite[Corollary~1.7]{HullOsin}.

\ref{it:eq-approx}$\Rightarrow$\ref{it:eq-limit}: Suppose $F \trianglelefteq G$ is a finite nontrivial normal subgroup.
Then $F \subset B_G(R)$ for some $R$. For $i$ large enough that $r_i > 2R$ and $W(G_i)=\{e\}$,
the preimage $\tilde{F} = \pi_i^{-1}(F) \cap B_{G_i}(R)$ is a finite subgroup of $G_i$
(injectivity of $\pi_i$ on $B_{G_i}(r_i)$ ensures products and inverses remain in $B_{G_i}(2R) \subset B_{G_i}(r_i)$).
Since $F$ is normal in $G$ and $\pi_i$ is $G$-equivariant, $\tilde{F}$ is normal in $G_i$,
contradicting $W(G_i) = \{e\}$.

\ref{it:eq-limit}$\Rightarrow$\ref{it:eq-approx}: Suppose $W(G) = \{e\}$.
Since $\pi_i \colon G_i \twoheadrightarrow G$ is surjective, $\pi_i(W(G_i))$ is a finite normal subgroup of $G$,
hence $\pi_i(W(G_i)) \subseteq W(G) = \{e\}$.
Thus $W(G_i) \subseteq \ker(\pi_i)$.

It remains to show $W(G_i) = \{e\}$.
We claim $\diam_{S_i}(W(G_i)) = O(\delta_i)$.
Since $G_i$ is non-elementary, loxodromic elements have limit points dense in $\partial G_i$.
Any $w \in W(G_i)$ lies in every maximal virtually cyclic subgroup $E(g)$ (for $g$ loxodromic),
so $w$ preserves all pairs of loxodromic fixed points in $\partial G_i$, hence fixes $\partial G_i$
pointwise.
An isometry of a $\delta_i$-hyperbolic space fixing the boundary pointwise has displacement $O(\delta_i)$:
for any loxodromic $g$, the quasi-axis $\ell$ passes within $O(\delta_i)$ of $e$, and $w\ell = \ell$
since $w$ fixes the two endpoints of $\ell$ in $\partial G_i$.
Since $w \in W(G_i)$ has finite order, $w$ acts elliptically on $G_i$; an elliptic isometry
preserving $\ell$ has displacement $O(\delta_i)$ on $\ell$ (by standard hyperbolic space arguments,
as a non-trivial translation of $\ell$ would give $w$ infinite order).
Hence $d(w\cdot x, x) = O(\delta_i)$ for $x$ on $\ell$ near $e$, so $|w|_{S_i} = O(\delta_i)$.
Hence $|w|_{S_i} = O(\delta_i)$.

Since $\delta_i = o(r_i)$, for all large $i$ every element of $W(G_i)$ lies in $B_{S_i}(r_i)$
where $\pi_i$ is injective. Combined with $W(G_i) \subseteq \ker(\pi_i)$, this forces $W(G_i) = \{e\}$.

\textit{MIF implies \ref{it:eq-limit}.}
If $w \in W(G)$ is nontrivial, let $N = |\mathrm{Aut}(W(G))|$.
Then $[x^N, w] = e$ for all $x \in G$ (since conjugation by $x^N$ acts trivially on the finite group $W(G)$),
giving a nontrivial mixed identity, contradicting MIF.

\textit{\ref{it:eq-limit} does not imply MIF.}
Let $T$ be an exponent-$p$ Tarski monster, constructed from a non-elementary hyperbolic group via \cite[Theorem~4.26(2), Remark~4.27]{OOS}.
Then $T$ is lacunary hyperbolic.
Since $T$ is infinite and simple, $W(T)$ is either $\{e\}$ or $T$; as $W(T)$ is finite and $T$ is infinite, $W(T) = \{e\}$, so condition~\ref{it:eq-limit} holds.
But $T$ satisfies the law $x^p = e$, so $T$ is not mixed-identity-free.
\end{proof}

\section{Non-acylindrically hyperbolic examples of groups that are selfless}\label{sec:examples}

\subsection{Torsion-free Tarski monsters}

\begin{example}[Torsion-free Tarski monsters]\label{ex:tarski-tf}
By \cite[Theorem~4.26(2)]{OOS}, starting from any non-elementary torsion-free
hyperbolic group $H$ (e.g.\ $H = F_2$ ), one constructs a torsion-free group $Q$
in which every proper subgroup is infinite cyclic, via a graded small cancellation
direct limit $H = G_0 \twoheadrightarrow G_1 \twoheadrightarrow \cdots$.
In particular, $Q$ is an infinite simple group. See \cite{O791, O93} for the original constructions of this group by Ol'shanskii. 

Each approximating group $G_i$ is a torsion-free non-elementary hyperbolic group
\cite[Theorem~4.26(2), Lemma~4.13(a)]{OOS}.
Since each $G_i$ is torsion-free, Lemma~\ref{lem:tf-sg} gives sufficient generics.
The parameters $\rho_i$ controlling the injectivity radius can be chosen to grow
as fast as desired \cite[Theorem~4.26]{OOS}, so condition~$(**)$ can be satisfied:
Theorem~\ref{thm:main} therefore applies: \emph{torsion-free Tarski monsters
satisfying $(**)$ are selfless.}

\begin{remark}
We note that Theorem~4.26(2) of \cite{OOS} together with Remark~4.27 refers
to the construction in \cite{O93}, which covers the torsion-free case and the
case where proper subgroups have finite order (with possibly varying orders), but
does not explicitly state the finite-exponent case over an arbitrary
non-elementary torsion-free hyperbolic group.
We are confident the construction extends to that setting, but it is not
written explicitly in the literature cited here.
\end{remark}

Since every proper subgroup of $Q$ is abelian, $Q$ is not acylindrically  hyperbolic.
By \cite{BKKO}, $Q$ is $\Cstar$-simple. Whether $Q$ has rapid decay is open, so we cannot conclude $\Cstar$-selflessness via this method in this case. This seems like an ideal candidate for potentially separating the two properties.

Additionally, torsion-free Tarski monsters with the same $\forall\exists$-theory as $F_2$ are constructed in \cite{CFFH}. This is significantly stronger than MIF: every $\forall\exists$-sentence true in $F_2$ holds in these monsters. Since selflessness is $\forall\exists$-axiomatisable (it is a quantified form of the mixed-identity-free property), the results of \cite{CFFH} may directly imply selflessness for these Tarski monsters, independently of Theorem~\ref{thm:main}. We note that using \cite{CFFH} to conclude $\Cstar$-selflessness would require continuous-logic considerations beyond \cite{CFFH}, which works in classical logic.
\end{example}

\subsection{Gromov monsters}

\begin{example}[Gromov monsters]\label{ex:gromov}
Gromov monster groups are groups obtained via geometric or graphical small cancellation
over sequences of logarithmic girth expander graphs \cite{GrRWRG}, \cite{ArzhDelzant}, \cite{Osajda}.
As explained in \cite{ArzhDelzant}, Gromov monsters are lacunary hyperbolic.

Geometric small cancellation Gromov monsters are \emph{not} acylindrically hyperbolic \cite{GST},
while certain special graphical small cancellation groups are
acylindrically hyperbolic \cite{GruberSisto} - so depending on the chosen small cancellation framework they straddle this boundary.

As these groups contain metrically embedded logarithmic girth expanders in their Cayley graphs, they are not $\Cstar$-exact. 

By choosing the sequence of graphs to have girth growing sufficiently fast we can enforce condition~$(**)$, which will enforce selflessness of Gromov monsters via Theorem~\ref{thm:main}. 

We note here directly that the geometric small cancellation examples explained in \cite{ArzhDelzant} can be made additionally to be property (T) groups, or by combining techniques to be Tarski monsters. 

Given these examples, we are motivated to make the following conjecture:
\begin{conjecture}
There exists a finitely generated group whose reduced $\Cstar$-algebra is $\Cstar$-selfless but not $\Cstar$-exact.
\end{conjecture}

What is missing to close this conjecture with the examples above is the rapid decay property, however an approach to the rapid decay property for infinitely presented small
cancellation groups was proposed in \cite{ArzhDrutu}, and we expect it to
apply in this setting (though this remains to be carried out in full). Such a group $G$ would be $\Cstar$-selfless by Theorem~\ref{thm:ecr}.

The interest of this example is that $C^*_r(G)$ would then be a
\emph{non-exact} $C^*$-algebra with strict comparison---a combination
not known to occur in the literature.

Finally, we remark that in the weaker case of classical small cancellation theory (i.e where the graphs are taken to be cycles and not expanders in the above framework), we can still obtain lacunary hyperbolic groups meeting condition~$(**)$ above. These groups do already have  rapid decay by \cite{ArzhDrutu} assuming strong enough small cancellation parameters, and so would be $\Cstar$-selfless. However, these groups also have finite asymptotic dimension with linear control function \cite{Sledd}, so are exact (because they have asymptotic dimension 2).
\end{example}

\subsection{MIF elementary amenable groups}

\begin{example}[Jacobson's MIF groups]\label{ex:jacobson}
Jacobson \cite[Theorem~1.2]{Jac} constructs, for each prime $p$, a
$2$-generated elementary amenable group $G(p,\mathbf{c})$ that is
mixed-identity-free (MIF).
The construction is a modification of \cite{OOS}: one chooses the sequence
$\mathbf{c} = (c_n)_{n \geq 1}$ inductively so that witness elements for the
MIF property survive each bonding map.

By \cite[Lemma~3.5]{Jac}, each approximating group $G_n$ is a non-elementary
hyperbolic group with $W(G_n) = \{e\}$.
By \cite[Remark~3.6]{Jac}, the sequence $\mathbf{c}$ can be chosen so that,
in addition, $G(p,\mathbf{c})$ is lacunary hyperbolic. By choosing the sequence $\mathbf{c}$ fast enough, we can ensure condition $(**)$ is met.

To verify sufficient generics, we use the structure of $G_n$:
all finite-order elements lie in $A_n$ \cite[Proof of Lemma~3.5]{Jac}, while
loxodromic elements lie outside $A_n$.
The bonding maps and the quotient $G(p,\mathbf{c}) \twoheadrightarrow G(p,\mathbf{c})/A(p,\mathbf{c}) \cong \mathbb{Z}$
preserve the $t$-exponent, so any loxodromic element of $G_n$ has nonzero $t$-exponent
and maps to an element of infinite order in $G(p,\mathbf{c})$.
This holds for \emph{any} choice of $\mathbf{c}$, since the $t$-direction is untouched: Lemma~\ref{lem:wg-sg} therefore applies.

We observe that the choices taken in Jacobson's results for the sequence $\mathbf{c}$ also meet condition $(**)$.

Concretely: Remark~3.6 of \cite{Jac} requires only that $c_{k+1}$ be chosen large
enough to ensure $r(\varepsilon_k) \geq f(\delta_k)$ for some $f$ with $n = o(f(n))$.
The minimal lacunary choice is $r_k \sim \delta_k^m$ for any fixed $m \geq 2$.
With $r_k = \delta_k^2$, condition~$(**)$ becomes
\[
  \delta_k(\log r_k)^{7} = \delta_k\bigl(2\log\delta_k\bigr)^{7}
  \sim \delta_k(\log\delta_k)^{7},
\]
while $r_k = \delta_k^2$, so $\delta_k(\log r_k)^{7}/r_k \sim (\log\delta_k)^{7}/\delta_k
\to 0$.

Whether $G(p,\mathbf{c})$ is selfless, or even MIF for slowly-growing $\mathbf{c}$ is not
determined by Theorem~\ref{thm:main} or the results presented in \cite{Jac}. 

Finally, since $G(p,\mathbf{c})$ is amenable, it is not acylindrically hyperbolic (it can't contain a free subgroup) - and since they are not virtually nilpotent, can't satisfy the rapid decay property. These groups are also not $\Cstar$-simple, so certainly cannot be $\Cstar$-selfless.
\end{example}


\begin{thebibliography}{99}

\bibitem{ArzhDelzant}
G.~Arzhantseva and T.~Delzant,
\textit{Examples of random groups},
available on the authors' websites, 2008.

\bibitem{ArzhDrutu}
G.~Arzhantseva and C.~Dru\c{t}u,
\textit{Geometry of infinitely presented small cancellation groups, rapid decay
and quasi-homomorphisms},
preprint, arXiv:1212.5280, 2012.

\bibitem{Alonso}
J.~M. Alonso, T.~Brady, D.~Cooper, V.~Ferlini, M.~Lustig, M.~Mihalik,
M.~Shapiro, and H.~Short,
\textit{Notes on word hyperbolic groups},
in: \textit{Group Theory from a Geometrical Viewpoint} (E.~Ghys, A.~Haefliger,
and A.~Verjovsky, eds.), World Scientific, Singapore, 1991, pp.~3--63.

\bibitem{AGKE}
T.~Amrutam, Y.~Gao, S.~Kunnawalkam Elayavalli, and C.~Patchell,
\textit{Strict comparison in reduced group $\Cstar$-algebras},
Invent. Math. \textbf{242} (2025), no.~3, 639--657.

\bibitem{Arz01}
G.~N.~Arzhantseva,
\textit{On quasiconvex subgroups of word hyperbolic groups},
Geom. Dedicata \textbf{87} (2001), no.~1, 191--208.

\bibitem{Bowditch}
B.~H. Bowditch,
\textit{Continuously many quasi-isometry classes of $2$-generator groups},
Comment. Math. Helv. \textbf{73} (1998), no.~2, 232--236.

\bibitem{BKKO}
E.~Breuillard, M.~Kalantar, M.~Kennedy, and N.~Ozawa,
\textit{$\Cstar$-simplicity and the unique trace property for discrete groups},
Publ. Math. Inst. Hautes \'Etudes Sci. \textbf{126} (2017), 35--71.

\bibitem{BuyaloSchroeder}
S.~Buyalo and V.~Schroeder,
\textit{Elements of Asymptotic Geometry},
EMS Monographs in Mathematics, European Mathematical Society, Z\"{u}rich, 2007.

\bibitem{CFFH}
R.~Coulon, F.~Fournier-Facio, and M.-C. Ho,
\textit{First-order theory of torsion-free Tarski monsters},
preprint, arXiv:2508.21244, 2025.

\bibitem{Osajda}
D.~Osajda,
\textit{Small cancellation labellings of some infinite graphs and applications},
Acta Math.\ \textbf{225} (2020), no.~1, 159--191.

\bibitem{GrRWRG}
M.~Gromov,
\textit{Random walk in random groups},
Geom. Funct. Anal. \textbf{13} (2003), no.~1, 73--146.

\bibitem{GST}
D.~Gruber, A.~Sisto, and R.~Tessera,
\textit{Random Gromov's monsters do not act non-elementarily on hyperbolic spaces},
Proc. Amer. Math. Soc. \textbf{148} (2020), no.~7, 2773--2782.

\bibitem{GruberSisto}
D.~Gruber and A.~Sisto,
\textit{Infinitely presented graphical small cancellation groups are acylindrically hyperbolic},
Ann. Inst. Fourier (Grenoble) \textbf{68} (2018), no.~6, 2501--2552.

\bibitem{DGO}
F.~Dahmani, V.~Guirardel, and D.~Osin,
\textit{Hyperbolically embedded subgroups and rotating families in groups acting
on hyperbolic spaces},
Mem. Amer. Math. Soc. \textbf{245} (2017), no.~1156.

\bibitem{HideLodha}
J.~Hide and Y.~Lodha,
\textit{Highly transitive actions and mixed identity free groups},
preprint, arXiv:2509.09788, 2025.

\bibitem{HullOsin}
M.~Hull and D.~Osin,
\textit{Transitivity degrees of countable groups and acylindrical hyperbolicity},
Israel J. Math. \textbf{216} (2016), no.~1, 307--353.


\bibitem{Jac}
B.~Jacobson,
\textit{A mixed identity-free elementary amenable group},
Comm. Algebra \textbf{49} (2021), no.~1, 235--241.


\bibitem{Rips}
E.~Rips,
\textit{Generalized small cancellation theory and its application~I\@. The word problem},
Israel J. Math. \textbf{41} (1982), no.~2, 1--146.

\bibitem{O791}
A.~Y. Ol'shanskii, \textit{An infinite simple torsion-free Noetherian group}, Izv. Akad. Nauk SSSR Ser. Mat. \textbf{43} (1979), no.~6, 1328--1393.


\bibitem{O82}
A.~Y. Ol'shanskii, \textit{Groups of bounded period with subgroups of prime order}, Algebra i Logika {\bf 21} (1982), no.~5, 553--618.


\bibitem{O93}
 A.~Y. Ol'shanskii, \textit{On residualing homomorphisms and $G$-subgroups of hyperbolic groups}, Internat. J. Algebra Comput. \textbf{3} (1993), no.~4, 365--409. 


\bibitem{Sledd}
L.~Sledd,
\textit{Assouad--Nagata dimension of finitely generated groups},
Topology Appl. \textbf{299} (2021), 107634.

\bibitem{ThomasVelickovic}
S.~Thomas and B.~Velickovic,
\textit{On the complexity of the isomorphism relation for finitely generated groups},
J. Algebra \textbf{217} (1999), no.~4, 352--373.

\bibitem{OOS}
A.~Yu. Ol'shanskii, D.~V. Osin, and M.~V. Sapir,
\textit{Lacunary hyperbolic groups},
Geom. Topol. \textbf{13} (2009), no.~4, 2051--2140.

\end{thebibliography}
\end{document}